\title[Homotopy groups of homotopy fixed point spectra associated to
$E_n$]{Homotopy groups of homotopy fixed point spectra\\associated to $E_n$}
\author{Ethan S Devinatz}
\address{Department of Mathematics\\
University of Washington\\\newline
Seattle\\
Washington\\
USA}
\email{devinatz@math.washington.edu}
\urladdr{http://www.math.washington.edu/~devinatz/}
\def\cnewtheorem#1[#2]#3{\newtheorem{#1}{#3}[section]
\expandafter\let\csname c@#1\endcsname\c@thm}
\let\xysavmatrix\xymatrix
\def\xymatrix{\disablesubscriptcorrection\xysavmatrix}
\def\co{\mskip 0.5mu\colon\thinspace}
\newtheorem{thm}{Theorem}[section]    % Standard theorem environment
\theoremstyle{definition}
\newcommand{\F}{\mathbb{F}}
\newcommand{\gal}{\mathrm{Gal}}
\newcommand{\bps}{BP_\ast}
\newcommand{\en}{E_n}
\newcommand{\ena}{E_{n\ast}}
\newcommand{\gn}{G_n}
\newcommand{\sn}{S_n}
\newcommand{\fpn}{\F_{p^n}}
\newcommand{\fp}{\F_p}
\newcommand{\wotimes}{\,\widehat\otimes\,}
\newcommand{\map}{\operatorname{Map}}
\newcommand{\mapc}{{\mathrm{Map}}_c}
\newcommand{\ext}{\mathrm{Ext}}
\newcommand{\lef}{\vartriangleleft}
\newcommand{\hn}{H_n}
\newcommand{\G}{\Gamma}
\newcommand{\Gn}{\G_n}
\newcommand{\Hom}{\operatorname{Hom}}
\newcommand{\holim}{\operatorname{holim}}
\renewcommand{\mod}{\operatorname{mod}}
\numberwithin{equation}{section}
\begin{document}

\begin{htmlabstract}
We compute the mod(p) homotopy groups of the continuous homotopy
fixed point spectrum E<sup>hH<sub>2</sub></sup><sub>2</sub> for
p&gt;2, where E<sub>n</sub> is the Landweber exact spectrum whose
coefficient ring is the ring of functions on the Lubin&ndash;Tate
moduli space of lifts of the height n Honda formal group law
over <b>F</b><sub>p<sup>n</sup></sub>, and H<sub>n</sub> is the subgroup
W<b>F</b><sup>&times;</sup><sub>p<sup>n</sup></sub>&#8906;Gal(<b>F</b><sub>p<sup>n</sup></sub>/<b>F</b><sub>p</sub>)
of the extended Morava stabilizer group G<sub>n</sub>. We examine some
consequences of this related to Brown&ndash;Comenetz duality and to
finiteness properties of homotopy groups of K(n)<sub>*</sub>&ndash;local
spectra. We also indicate a plan for computing
&pi;<sub>*</sub>(E<sup>hH<sub>n</sub></sup><sub>n</sub>&#8743;V(n-2)),
where V(n-2) is an E<sub>n*</sub>&ndash;local Toda complex.
\end{htmlabstract}

\begin{webabstract}
We compute the $\mathrm{mod}(p)$ homotopy groups of the continuous
homotopy fixed point spectrum $E^{hH_2}_2$ for $p>2$, where $E_n$
is the Landweber exact spectrum whose coefficient ring is the
ring of functions on the Lubin--Tate moduli space of lifts of
the height $n$ Honda formal group law over $\mathbb{F}_{p^n}$,
and $H_n$ is the subgroup $W\mathbb{F}^{\times}_{p^n}\rtimes
\mathrm{Gal}(\mathbb{F}_{p^n}/\mathbb{F}_p)$ of the extended Morava
stabilizer group $G_n$. We examine some consequences of this related
to Brown--Comenetz duality and to finiteness properties of homotopy
groups of $K(n)_{\ast}$--local spectra. We also indicate a plan for
computing $\pi_{\ast}(E^{hH_n}_n\wedge V(n{-}2))$, where $V(n{-}2)$
is an $E_{n\ast}$--local Toda complex.
\end{webabstract}

\begin{abstract}
We compute the $\mod(p)$ homotopy groups of the continuous homotopy fixed
point spectrum $E^{hH_2}_2$ for $p>2$, where $E_n$ is the Landweber
exact spectrum whose coefficient ring is the ring of functions on the
Lubin--Tate moduli space of lifts of the height $n$ Honda formal group
law over $\F_{p^n}$, and $H_n$ is the subgroup $W\F^\times_{p^n}\rtimes
\gal(\F_{p^n}/\F_p)$ of the extended Morava stabilizer group $G_n$. We
examine some consequences of this related to Brown--Comenetz duality
and to finiteness properties of homotopy groups of $K(n)_\ast$--local
spectra. We also indicate a plan for computing $\pi_\ast(E^{hH_n}_n\wedge
V(n{-}2))$, where $V(n{-}2)$ is an $E_{n\ast}$--local Toda complex.
\end{abstract}

\maketitle

\section*{Introduction}
Let $E_n$ denote the Landweber exact spectrum with coefficient ring
\[
E_{n\ast}=W\F_{p^n}\llbracket u_1,\ldots, u_{n-1}\rrbracket[u,u^{-1}],
\]
where $W\F_{p^n}$ denotes the ring of Witt vectors with coefficients in the
field $\F_{p^n}$ of $p^n$ elements, and whose $BP_\ast$--algebra structure
map $r\co BP_\ast\to E_{n\ast}$ is given by
\[
r(v_i)=\left\{\begin{array}{c@{\qquad}l}
u_iu^{1-p^i} & i<n \\
u^{1-p^n} & i=n \\
0 & i>n\end{array}\right.
\]
where $v_i\in\bps$ is the $i^{\mathrm{th}}$ Hazewinkel generator. In
particular, each $u_i$ has degree $0$ and $u$ has degree $-2$. $\en$ is a
commutative ring spectrum, and Morava theory tells us that the group of ring
automorphisms of $\en$ is isomorphic to the profinite group
$\gn=\sn\rtimes\gal$, where $\sn$ denotes the group of (not necessarily
strict) isomorphisms of the height $n$ Honda formal group law over $\fpn$,
and $\gal$ is the Galois group of $\fpn/\fp$. A priori, $\gn$ acts on $\en$
only in the stable category, but Hopkins and Miller (later improved by
Goerss and Hopkins) proved that this can be made an honest action in an
appropriate point set category of spectra (see Goerss and Hopkins \cite{9} and Rezk \cite{14}).
``Continuous homotopy fixed point spectra'' may also be constructed \cite{7}:
if $G$ is a closed subgroup of $\gn$, the continuous homotopy $G$ fixed
point spectrum will be denoted by $\en^{hG}$; if $G$ is finite, this
spectrum agrees with the ordinary homotopy fixed point spectrum. Moreover,
$\en^{h\gn}\simeq L_{K(n)}S^0$, the $K(n)_\ast$--localization of $S^0$,
$\en^{hG}$ has the expected functorial properties, and there is a strongly
convergent ``continuous homotopy fixed point spectral sequence''
\[
H^\ast_c(G,\en^\ast X)\Rightarrow (\en^{hG})^\ast X
\]
for any spectrum $X$. ($H^\ast_c(G,\en^\ast X)$ denotes the continuous
cohomology of $G$ with coefficients in the profinite $G$--module $\en^\ast
X$.)

The hope of this paper is to make some headway towards the computation of
$\pi_\ast\en^{hG}$, for $G$ a closed subgroup of $\gn$. At first sight, this
program seems impossible: the formulas for the action of (most elements of)
$\gn$ on $\ena$ are extremely complicated (see Devinatz and Hopkins \cite{6}), making the
computation of $H^\ast_c(G,\ena)$ apparently inaccessible. However,
\[
H^\ast_c(\gn,N)=\ext^\ast_{\mapc(G,\ena)}(\ena,N),
\]
where $(\ena,\mapc(G,\ena))$ is the complete Hopf algebroid defined using
the action of $G$ on $\ena$ (see for example Devinatz \cite{5}). Since
$\mapc(G,\ena)$ is a quotient of 
\[
\mapc(\gn,\ena)=\ena\wotimes_{\bps}\bps
BP\wotimes_{\bps}\ena\equiv\ena^\wedge\en,
\]
one may try to use the Hopf algebroid structure maps
in $\bps BP$ together with several Bockstein spectral sequences to go from,
for example, $H^\ast_c(G,\ena/I_n)$ to $H^\ast_c(G,\ena)$. As usual, $I_n$
is the maximal ideal $(p,u_1,\ldots,u_{n-1})$ in $\ena$.

Let $H_n=W\fpn^\times\rtimes\gal\subset\gn$, where $W\fpn^\times$ is the
subgroup of $\sn$ consisting of the diagonal matrices (see \fullref{sec1}), and let
$M(p)$ denote the $\mod(p)$ Moore spectrum. We compute
$\pi_\ast\bigl(\smash{E^{hH_2}_2}\wedge M(p)\bigr)$ for all primes $p>2$
(\fullref{thm3.8}). Of course, $\pi_\ast L_{K(2)}M(p)$ is known (Shimomura \cite{15,16}) for
$p>2$, so it is unclear if our computation yields any new homotopy
information. Our computation is, however, much simpler and already indicates
the necessity of ``$p$--adic suspensions'' in the Gross--Hopkins work on
Brown--Comenetz duality (\fullref{rem3.9}). Moreover, we believe that computations
such as $\pi_\ast\bigl(\en^{hH_n}\wedge V(n{-}2)\bigl)$---recall that the
Toda complex $V(n{-}2)$ exists $\ena$--locally whenever $p$ is sufficiently
large compared to $n$---should be accessible to more skilled calculators.

Even when a complete calculation of $\pi_\ast\en^{hG}$ is unattainable,
partial information can lead to interesting consequences. For example, it is
a long-standing conjecture that $\pi_\ast L_{K(n)}S^0$ is a module of finite
type over the $p$--adic integers $\Z_p$. 
(This conjecture is known to be true for $n=1$, and, if $p\ge 3$, for $n=2$
Shimomura and Wang \cite{17}, Shimomura and Yabe \cite{18}. The reader may
also find Hovey and Strickland \cite[Theorem~15.1]{12}
interesting, where it is shown that $\pi_\ast L_{K(2)}S^0$ is \emph{not} of
finite type---at least when $p\ge 5$---if the grading is taken to be over
the Picard group of invertible spectra in the $K(2)_\ast$--local category.)
By a thick
subcategory argument---see Devinatz \cite{3} for a discussion of this in the
$\ena$--local category---if $\pi_\ast L_{K(n)}X$ is of finite type for some
$X$ in the $\ena$--local category, then $\pi_\ast L_{K(n)}Y$ is of finite
type for any finite $Y$ such that
\[
\{m\le n:K(m)_\ast Y\ne 0\} \subset \{m\le n:K(m)_\ast X\ne 0\}.
\]
This in turn only requires that we prove that $\pi_\ast (\en^{hG}\wedge X)$
is of finite type for some closed subgroup $G$ of $\gn$ for which there
exists a chain
\[
G=K_0\lef K_1\lef\cdots\lef K_t=\gn
\]
of closed subgroups. Indeed, assume inductively that
$\pi_\ast(\en^{hK_i}\wedge X)$ is of finite type. Then, since $K_{i+1}/K_i$
is a $p$--adic analytic profinite group (see Dixon, du Sautoy, Mann and Segal \cite[Theorem 9.6]{8}), we have
that $H^\ast_c\bigl(K_{i+1}/K_i,\pi_\ast(\smash{\en^{hK_i}}\wedge Y)\bigr)$ is also
of finite type. (This follows from the fact that any $p$--adic analytic
profinite group is of type $p-FP_\infty$ in the language of Symonds and Weigel \cite{20}.) But,
in an earlier paper \cite{4}, we constructed a strongly convergent spectral sequence
\[
H^\ast_c\bigl(K_{i+1}/K_i,\pi_\ast\bigl(\smash{\en^{hK_i}}\wedge X\bigl)\bigr) \Rightarrow
\pi_\ast(\en^{hK_{i+1}}\wedge X)
\]
and showed that its $E_\infty$ term has a horizontal vanishing line. This
implies that the group $\pi_\ast\bigl(\smash{\en^{hK_{i+1}}}\wedge X\bigr)$ is of finite type and
hence, by induction, so is $\pi_\ast
L_{K(n)}X={\pi_\ast\bigl(\smash{\en^{h\gn}}\wedge X\bigr)}$.

These considerations are unfortunately not applicable to $G=\hn$, since the
normalizer of $\hn$ in $\gn$ is $\hn$, and, moreover, the group
$\pi_\ast\bigl(\smash{E^{hH_2}_2}\wedge M(p)\bigr)$ is not even of finite type. Yet
it is, in some sense, ``almost'' of finite type (see \fullref{sec4}), although the
significance of this property is not clear.

The author was partially supported by a grant from the NSF.

\setcounter{section}{0}
\section{$H^\ast_c(\hn,\ena/I_n)$ and its Hopf algebroid description}
\label{sec1}

Recall that the group $\sn$ may be described in several ways. If $\Gn$
denotes the height $n$ Honda formal group law over $\fpn$, then $\sn$
consists of all formal power series of the form $\sum_{i\ge 0}^{\Gn}
b_ix^{p^i}$ with each $b_i\in\fpn$ and $b_i\ne 0$. The ring of endomorphisms
of $\Gn$ may also be described as the ring obtained by adjoining an
indeterminate $S$---which corresponds to the endomorphism $f(x)=x^p$---to $W\fpn$ along with the relations $S^n=p$ and $Sw=w^\sigma S$, where
$\sigma\co W\fpn\to W\fpn$ denotes the Frobenius automorphism. The
automorphism $\sum_{i\ge 0}^{\Gn}b_ix^{p^i}$ corresponds to the element
$\sum^{n-1}_{i=0}a_iS^i$ with
\[
a_i=\sum_{k\ge 0}e(b_{i+nk})p^k,
\]
where $e(b)$ is the multiplicative representative of $b$ in $W\fpn$. The
subgroup $W\fpn^\times$ of $\sn$ is then the group of automorphisms with
$a_i=0$ for all $i>0$. In terms of matrices, $\sn$ is the subgroup of
$GL_n(W\fpn)$ consisting of matrices of the form
\[
\left[\begin{array}{ccccc}
a_0 & pa_{n-1} & pa_{n-2} & \cdots & pa_1 \\
a^{\sigma^{-1}}_1 & a^{\sigma^{-1}}_0 & pa^{\sigma^{-1}}_{n-1} & \cdots &
pa^{\sigma^{-1}}_2 \\
\vdots & \vdots & \vdots & & \vdots\\
\vdots & \vdots & \vdots & & pa^{\sigma^{-(n-2)}}_{n-1} \\
a^{\sigma^{-(n-1)}}_{n-1} & a^{\sigma^{-(n-1)}}_{n-2} &
a^{\sigma^{-(n-1)}}_{n-3} & \cdots & a^{\sigma^{-(n-1)}}_0
\end{array}\right]\quad ,
\]
and $W\fpn^\times$ is the subgroup of diagonal matrices in $\sn$. 

Now let $\sn^0$ be the $p$--Sylow subgroup of $\sn$ consisting of strict
automorphisms of $\Gn$. There is a split extension
\[
\sn^0\to \sn\to\fpn^\times;
\]
the map $\sn\to\fpn^\times$ is given by
$\sum^{n-1}_{i=0}a_iS^i\mapsto\overline{a_0}$, and the splitting sends
$a\in\fpn$ to $e(a)\in W\fpn^\times\subset\sn$. This map also gives us a
splitting of the short exact sequence
\[
0\to W\fpn^0\to W\fpn^\times\to\fpn^\times\to 0,
\]
and hence an isomorphism $W\fpn^\times\to W\fpn^0\times\fpn^\times$. Since
the order  of $\fpn^\times$ is prime to $p$, it follows that
\[
H^\ast_c(W\fpn^\times, N)\mathop{\longrightarrow}\limits^{\approx}
H^\ast_c(W\fpn^0, N)^{\fpn^\times}
\]
whenever $N$ is a discrete $\Z_p\llbracket W\fpn^\times\rrbracket$--module. 
Now suppose, in addition, that $N$ is a $W\fpn$--module and $H_n$--module in
such a way that the $W\fpn^\times$ action is $W\fpn$--linear and that
$\sigma(cn)=c^\sigma\sigma(n)$ for all $c\in W\fpn$ and $n\in N$. It then
follows from Devinatz \cite[Lemma 5.4]{1} that $H^i\bigl(\gal, H^\ast_c(W\fpn^\times,
N)\bigr)=0$ for all $i>0$, and hence 
\[
H^\ast_c(\hn, N)\mathop{\longrightarrow}\limits^{\approx}
H^\ast_c(W\fpn^\times, N)^{\gal}.
\]
Now $\sn$ acts on $\ena/I_n=\fpn[u,u^{-1}]$ via $\fpn$--algebra
homomorphisms, and the action on $u$ is given by 
\begin{equation}\label{eqn1.1}
\Bigl(\mathop{\textstyle{\sum}}\limits^{n-1}_{i=0}a_iS^i\Bigr)(u)=\overline{a_0}u,
\end{equation}
where, once again, $\overline{a_0}$ is the $\mod(p)$ reduction of $a_0$.
From this it follows that
\[
H^\ast_c(W\fpn^\times, \fpn[u,u^{-1}])=\fpn[v_n,v^{-1}_n]\otimes_{\fpn}
H^\ast_c(W\fpn^0, \fpn).
\]
Moreover, since $\gal$ acts trivially on $v_n$,
\[
H^\ast_c(\hn, \fpn[u,u^{-1}]) = \fp[v_n,v^{-1}_n]\otimes H^\ast_c(W\fpn^0,
\fpn)^{\gal}.
\]
It is also easy to compute $H^\ast_c(W\fpn^0,\fpn)^{\gal}$. Let
$g_i\in\Hom_c(W\fpn^0, \fpn)$ be defined by
\begin{equation}\label{eqn1.2}
g_i\Bigl(1+\mathop{\textstyle{\sum}}\limits_{j\ge 1}e(c_j)p^j\Bigr)=c_1^{p^i}=c^{\sigma^i}_1,
\end{equation}
$0\le i\le n-1$. Since the Galois automorphisms
$id,\sigma,\ldots,\sigma^{n-1}$ are linearly independent over $\fpn$, so are
the $g_i$'s. Now, and for the rest of this section, assume that $p>2$. Then
$$\Z^n_p\approx W\fpn \stackrel{\approx}{\longrightarrow} W\fpn^0$$
via
the map sending $x\in W\fpn$ to $\exp(px)=1+\smash{\sum_{j\ge
1}\frac{p^jx^j}{j!}}\in W\fpn^0$, so that $H^\ast_c(W\fpn^0, \fpn)$ is the
exterior algebra over $\fpn$ on $n$ generators in $H^1_c(W\fpn^0, \fpn)$.
This implies that these generators may be taken to be $g_0, g_1, \ldots,
g_{n-1}$. Each $g_i$ is Galois invariant, so
\begin{equation}\label{eqn1.3}
H^\ast_c(\hn,\fpn[u,u^{-1}]) = \fp[v_n,v^{-1}_n]\otimes
E(g_0,g_1,\ldots,g_{n-1}).
\end{equation}
Next consider the complete Hopf algebroid $\bigl(\ena, \mapc\bigl(W\fpn^0,
\ena\bigr)\bigr)\equiv
 (\ena,\Sigma_n)$. We explicitly identify
$\Sigma_n/I_n\Sigma_n$ as a quotient of $\ena^\wedge\en/I_n\ena^\wedge\en$
and give cobar representatives for
\[
g_i\in H^{1,0}_c(W\fpn^0, \fpn[u,u^{-1}]) =
\ext^{1,0}_{\Sigma_n/I_n\Sigma_n} (\fpn[u,u^{-1}], \fpn[u,u^{-1}]).
\]
First recall that the maps $\eta_L, \eta_R\co\ena\to\mapc(\gn,\ena)$ are given
by $\eta_R(x)(s)=x$, $\eta_L(x)(s)=s^{-1}x$. Since $W\smash{\fpn^0}\subset \gn$ acts
trivially on $W\fpn\subset \ena$, it follows that
$\eta_R\big|_{W\fpn}=\eta_L\big|_{W\fpn}$ in $\Sigma_n$, so that $\Sigma_n$
is a Hopf algebra over $W\fpn$ and is a quotient of
\begin{eqnarray*}
W\fpn\otimes_{\Z_p}\mapc(\sn,\ena)^{\gal} & = &
W\fpn\otimes_{\Z_p}(\ena^{\gal}\wotimes_{\bps}\bps
BP\wotimes_{\bps}\ena^{\gal}) \\
& = & W\fpn\otimes_{\Z_p}(\en^{\gal})^\wedge_\ast\en^{\gal},
\end{eqnarray*}
where $\en^\gal$ is the Landweber exact spectrum with coefficient ring
$$\Z_p\llbracket u_1,\ldots, u_{n-1}\rrbracket [u,u^{-1}].$$
Now let $u\equiv\eta_R(u)$ and $w\equiv\eta_L(u)$ in
$(\en^\gal)^\wedge_\ast\en^\gal$. By \eqref{eqn1.1}, we have that $u=w$ in
$\Sigma_n/I_n\Sigma_n$. Moreover, the image of $t_j\in\bps BP$ in
$(\en^\gal)^\wedge_\ast\en^\gal$---also denoted $t_j$---satisfies
\[
t_j\Bigl(\sum_{i\ge 0}{}^{\Gn}b_ix^{p^i}\Bigr)=u^{1-p^i}b^{-1}_0b_j\quad\mod
I_n(\en^\gal)^\wedge_\ast\en^\gal
\]
(see Devinatz \cite[Proposition 2.11]{1}), and thus
\begin{equation}\label{eqn1.4}
\Sigma_n/I_n\Sigma_n=\fpn[u,u^{-1}][t_n,t_{2n},\ldots]/J_n,
\end{equation}
with
\[
J_n=(t^{p^n}_n-v^{p^n-1}_n t_n, t^{p^n}_{2n} - v^{p^{2n}-1}_n t_{2n},
\ldots, t^{p^n}_{jn} - v^{p^{jn}-1}_n t_{jn},\ldots).
\]
Finally, let $g=v^{-1}_nt_n\in\Sigma_n$. These considerations imply that
$g^{p^i}\in\Sigma_n/I_n\Sigma_n$ is a cobar representative for $g_i\in
H^1(W\fpn^0, \fpn)$.

\section{The Bockstein spectral sequence}
\label{sec2}

\setcounter{equation}{0}

Fix a prime $p$ and integer $n\ge 2$, and let $N$ be a complete
$\bigl(\ena,\map_c(\hn,\ena)\bigr)$--comodule. Write
%\begin{eqnarray*}
$$H^\ast N\equiv H^\ast_c(\hn,N) =
H^\ast_c(W\fpn^0,N)^{\fpn^\times\rtimes\gal}
= \ext^\ast_{\Sigma_n}(\ena,N)^{\fpn^\times\rtimes\gal}.$$
%\end{eqnarray*}
The Bockstein spectral sequence we will use is defined by the exact couple
\begin{equation}\label{eqn2.1}
\xymatrix  @R=2pc @C=-1pc{
H^\ast(\ena/(p,u_1,\ldots,u_{n-2})) \ar[dr] &&
H^\ast(\ena/(p,u_1,\ldots,u_{n-2})) \ar[ll]_{v_{n-1}} \\
& H^\ast(\ena/I_n). \ar[ur]
}
\end{equation}
Truncated, this spectral sequence is isomorphic to the spectral sequence of
the unrolled exact couple
\[
\xymatrix @R=2pc @C=-1pc{
0 \ar[dr] && H^\ast(\ena/I_n) \ar[ll] \ar[dr] && H^\ast(\ena/(p,\ldots, u_{n-2},u^2_{n-1}))
\ar[ll] \ar[dr] && \cdots \ar[ll] \\
& H^\ast(\ena/I_n) \ar[ur] && H^\ast(\ena/I_n) \ar[ur]_{v_{n-1}} &&
H^\ast(\ena/I_n) \ar[ur]_{v^2_{n-1}}
}
\]
Since $H^{\ast\ast}(\ena/(p,u_1,\ldots, u_{n-2}, u^k_{n-1}))$ is finite
in each bidegree, this spectral sequence converges strongly to 
\[
H^\ast(\ena/(p,\ldots,u_{n-2}))=\lim_{\mathop{\leftarrow}\limits_{j}}
H^\ast(\ena/(p,u_1,\ldots, u_{n-2},u^j_{n-1})).
\]

\section{Computation of $\pi_\ast(E^{hH_2}\wedge M(p))$}
\label{sec3}

\setcounter{equation}{0}

In this section, we specialize the above spectral sequence to the case $n=2$,
$p>2$. Write $\wwbar{\Sigma}_2=\Sigma_2/p\Sigma_2$, and let
$g=v^{-1}_2t_2\in\wwbar{\Sigma}_2$ as in \fullref{sec1}.

We will need the following congruences for our calculation of the
differentials in the Bockstein spectral sequence.

\begin{lem}\label{lemma3.1}
$v^{1-p^2}_2t^{p^2}_2=t_2\quad\mod v^{p+1}_1\wwbar{\Sigma}_2$.
\end{lem}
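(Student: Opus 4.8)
The statement to prove is a congruence $v_2^{1-p^2}t_2^{p^2}=t_2 \bmod v_1^{p+1}\bar\Sigma_2$ in $\bar\Sigma_2=\Sigma_2/p\Sigma_2$. The natural starting point is the identity already recorded in \eqref{eqn1.4}: modulo $I_n$ (here $n=2$, so modulo $(p,u_1)=(p,v_1)$) we have $t_2^{p^2}=v_2^{p^2-1}t_2$ in $\Sigma_2/I_2\Sigma_2$. Multiplying by $v_2^{1-p^2}$ (a unit, since $v_2$ corresponds to a unit times $u^{1-p^2}$) gives $v_2^{1-p^2}t_2^{p^2}=t_2$ modulo $v_1\bar\Sigma_2$. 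The content of the lemma is the improvement of this from ``mod $v_1$'' to ``mod $v_1^{p+1}$.''

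First I would pin down exactly what $\Sigma_2$ is as a ring before reduction mod $I_2$: it is a quotient of $(\en^\gal)^\wedge_\ast\en^\gal = \ena^\gal\wotimes_{\bps}\bps BP\wotimes_{\bps}\ena^\gal$, and in particular the relations defining $J_2$ in \eqref{eqn1.4} are the mod-$I_2$ reductions of relations holding in $\bar\Sigma_2$. The key input is the structure of $\bps BP$ and the Hazewinkel/Araki formulas for the right unit: in $\bps BP$ one has the standard formula expressing $\eta_R(v_n)$ in terms of the $v_i$ and $t_j$, and likewise the formula $t_j(\sum^{\Gamma_n}b_ix^{p^i})=u^{1-p^j}b_0^{-1}b_j \bmod I_n$ quoted from \cite[Proposition 2.11]{1}. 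I would track the next-order term: work not mod $I_2=(p,v_1)$ but mod $(p,v_1^{p+1})$, i.e.\ in $\bar\Sigma_2/v_1^{p+1}\bar\Sigma_2$. The relevant fact is that $v_1$ is raised to high powers in the correction terms because the relation $t_2^{p^2}-v_2^{p^2-1}t_2\in J_2$ comes from an exact formula in $\bps BP$ whose error terms, when pushed to $\bar\Sigma_2$, are divisible by $v_1$ to a power governed by $p$ — this is the familiar phenomenon that in $BP_*BP/(p)$ the congruence $t_1^{p^n}\equiv\cdots$ picks up $v_1$-divisibility, sharpened here for $t_2$.

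Concretely, the steps are: (1) choose a lift of the relation $t_2^{p^2}=v_2^{p^2-1}t_2$ to $\bar\Sigma_2$, writing $v_2^{1-p^2}t_2^{p^2}-t_2 = v_1\cdot R$ for some $R\in\bar\Sigma_2$; (2) identify $R$ mod $v_1^p\bar\Sigma_2$ using the explicit form of the right-unit formula for $v_2$ modulo $(p, v_1^2)$ in $BP_*BP$, which involves $t_1$ — but note that in $\Sigma_2$ the classes $t_1, t_2, t_3,\ldots$ that are \emph{not} of the form $t_{jn}$ have been killed or expressed in terms of $u,w$ via \eqref{eqn1.1}; since $u=w$ mod $I_2\Sigma_2$, the $t_1$-contribution itself carries extra $v_1$-divisibility; (3) iterate/bootstrap: feed the mod-$v_1$ information back into the formula to gain one $p$-power at a time, exactly as one proves $t_1^{p}\equiv v_1^{p-1}t_1$-type congruences by successive approximation, until the error reaches $v_1^{p+1}$. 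The arithmetic of $p$-th powers (freshman's dream mod $p$: $(a+v_1 b)^{p^2}=a^{p^2}+v_1^{p^2}b^{p^2}$, with $p^2\ge p+1$) is what makes the jump from $v_1$ to $v_1^{p+1}$ rather than just $v_1^2$ essentially automatic once the first correction term is shown to vanish mod $v_1^2$.

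The main obstacle will be step (2): controlling the first-order correction term $R$ mod $v_1$, i.e.\ verifying that the coefficient of $v_1^1$ in $v_2^{1-p^2}t_2^{p^2}-t_2$ actually vanishes mod $v_1$ (equivalently that the true obstruction sits in degree $v_1^2$ or higher, so that the $p$-th-power argument can then carry it to $v_1^{p+1}$). This requires the explicit mod-$(p,v_1^2)$ form of $\eta_R$ on the relevant generators in $\bps BP$ — combining $\eta_R(v_2) = v_2 + v_1 t_1^p - v_1^p t_1 \bmod (p, v_1^2,\ldots)$-type formulas with the relations that cut $\bps BP$ down to $\Sigma_2$ — and careful bookkeeping of which monomials survive in $\bar\Sigma_2$ after imposing $u=w$ and the Honda-formal-group relations from \fullref{sec1}. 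I expect the surviving terms to cancel for parity/degree reasons (the offending monomial would have to be $v_1 t_1$ or $v_1 t_1^p$, which in $\bar\Sigma_2$ is expressible via $u-w\in I_2$, hence already $\equiv 0$ mod $v_1\cdot I_2$, i.e.\ mod $(v_1 p, v_1^2)$), giving the needed mod-$v_1^2$ vanishing and thence, by the $p$-th power bootstrap, the asserted congruence mod $v_1^{p+1}$.
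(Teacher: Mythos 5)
There is a genuine gap, and it sits exactly where the lemma's content lies. Your concrete inputs are the mod--$I_2$ presentation \eqref{eqn1.4} (which gives only $v_2^{1-p^2}t_2^{p^2}\equiv t_2$ mod $v_1\wwbar{\Sigma}_2$) and the right-unit formula $\eta_R(v_2)\equiv v_2+v_1t_1^p-v_1^pt_1$ mod $p$. But the latter involves only $t_1$; nowhere in your plan do you invoke an identity in $\wwbar{\Sigma}_2$ in which $t_2^{p^2}$ and $t_2$ both occur, so you have no source from which the refined congruence could be extracted. The paper's proof gets exactly such an identity from the structural formula $\sum^F_{i,j}t_i\eta_R(v_j)^{p^i}=\sum^F_{i,j}v_it_j^{p^i}$ in $\bps BP/p$ (Ravenel A2.2.5), truncated through power-series degree $p^4$: the terms $t_2\eta_R(v_2)^{p^2}$ and $v_2t_2^{p^2}$ appear on the two sides, and the exponent $p+1$ arises because $\eta_R(v_2)-v_2=v_1t_1^p-v_1^pt_1$ together with $t_1\in v_1\wwbar{\Sigma}_2$ puts both error terms in $v_1^{p+1}\wwbar{\Sigma}_2$ (likewise $t_3\in v_1\wwbar{\Sigma}_2$ disposes of the other stray terms). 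An alternative source would be the relations obtained from $\eta_R(v_3)$, $\eta_R(v_4)$, which vanish in $\Sigma_2$; but some such relation is indispensable, and your plan contains none.

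The mechanism you propose in its place --- that once the first correction is shown to vanish mod $v_1^2$, a ``freshman's dream'' bootstrap automatically upgrades it to mod $v_1^{p+1}$ --- is not a valid inference. The element $v_2^{1-p^2}t_2^{p^2}-t_2$ is not the $p$-th or $p^2$-th power of any quantity you control to higher accuracy (mod $I_2$ there is no relation expressing $t_2^p$ in terms of $t_2$, since $g^p$ and $g$ represent independent classes), and divisibility by $v_1^2$ never implies divisibility by $v_1^{p+1}$. Indeed the true error terms are $v_1t_1^p$ and $v_1^pt_1$, each exactly in $v_1^{p+1}\wwbar{\Sigma}_2$; the bound $p+1$ is read off from these specific monomials, not produced by raising a mod-$v_1^2$ statement to $p$-th powers. (Your parenthetical reduction of the problem to the monomials $v_1t_1$, $v_1t_1^p$ via $u-w\in I_2\Sigma_2$ is essentially the observation $t_1\in v_1\wwbar{\Sigma}_2$, which is correct and is used in the paper --- but it only helps once those monomials are known to occur as the \emph{entire} error in a genuine identity relating $t_2^{p^2}$ to $t_2$, which is the missing ingredient.)
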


\setcounter{equation}{1}
\begin{proof}
Begin with the formula (see Ravenel \cite[Theorem A2.2.5]{13})
\[
\sum_{i,j\ge 0}{}^F t_i\eta_R(v_j)^{p^i} = \sum_{i,j\ge 0}{}^F v_it^{p^i}_j
\]
in $\bps BP/p\bps BP$, where $F$ is the universal $p$--typical formal group
law on $\bps$. Up through power series degree $p^4$ we then have
\begin{multline}\label{eqn3.2}
v_1+_Fv^p_1t_1+_Fv^{p^2}_1t_2+_Fv^{p_3}_1t_3+_F\eta_R(v_2)+_Ft_1\eta_R(v_2)^p+_Ft_2\eta_R(v_2)^{p^2}
\\
= v_1+_Fv_1t^p_1+_Fv_1t^p_2+_Fv_1t^p_3+_Fv_2+_Fv_2t^{p^2}_1+_Fv_2t^{p^2}_2
\end{multline}
in $E^\wedge_{2\ast}E_2/pE^\wedge_{2\ast}E_2$. But
$\eta_R(v_2)=v_2+v_1t^p_1-v^p_1t_1$ in $\bps BP/p\bps BP$, and therefore,
since $t_1\in v_1\wwbar{\Sigma}_2$, $\eta_R(v_2)=v_2$ $\mod v^{p+1}_1\wwbar{\Sigma}_2$.
$t_3$ is also in $v_1\wwbar{\Sigma}_2$; hence, $\mod v^{p+1}_1\wwbar{\Sigma}_2$,
\eqref{eqn3.2} reduces to
\[
v^p_2t_1+_Fv^{p^2}_2t_2=v_1t^p_2+_Fv_2t^{p^2}_2.
\]
The desired result follows immediately from this equation.
\end{proof}

\setcounter{thm}{2}

\begin{lem}\label{lemma3.3}
$t_1=v_1g^p-v^{p+2}_1v^{-1}_2g+v^{p+2}_1v^{-1}_2g^p\quad\mod
v^{2p+3}_1\wwbar{\Sigma}_2$.
\end{lem}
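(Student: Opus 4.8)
The plan is to derive the identity from a single homogeneous piece of the fundamental relation, reduced modulo a higher power of $v_1$ than in \fullref{lemma3.1}. As in that proof, start from
\[
\sum_{i,j\ge 0}{}^{F}t_i\eta_R(v_j)^{p^i}=\sum_{i,j\ge 0}{}^{F}v_it_j^{p^i}
\]
in $\bps BP/p\,\bps BP$ and its truncation \eqref{eqn3.2} through power series degree $p^4$, pushed into $\wwbar{\Sigma}_2$. The observation driving everything is that $v_2^pt_1$ and $v_1t_2^p$ both have degree $2(p^3-1)$, while $v_2$ is a unit in $\wwbar{\Sigma}_2$, $v_j=0$ there for $j\ge 3$, and $t_1,t_3\in v_1\wwbar{\Sigma}_2$; so the degree $2(p^3-1)$ component of \eqref{eqn3.2} is an equation one can attempt to solve for $t_1$ modulo $v_1^{2p+3}\wwbar{\Sigma}_2$ by successive approximation in the $v_1$--adic filtration.

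First I would isolate that degree $2(p^3-1)$ component. Checking degrees, the only formal-group-free summands of \eqref{eqn3.2} in this degree are $v_1^{p^2}t_2$ and $t_1\eta_R(v_2)^p$ on the left and $v_1t_2^p$ and $v_2t_1^{p^2}$ on the right. Using $t_1\in v_1\wwbar{\Sigma}_2$ together with $\eta_R(v_2)=v_2+v_1t_1^p-v_1^pt_1$, and that $p\ge 3$ forces $p^2\ge 2p+3$ and $p^2+p+1\ge 2p+3$, one sees that $v_1^{p^2}t_2$, $v_2t_1^{p^2}$ and $t_1\bigl(\eta_R(v_2)^p-v_2^p\bigr)$ all lie in $v_1^{2p+3}\wwbar{\Sigma}_2$; hence, modulo $v_1^{2p+3}\wwbar{\Sigma}_2$ and up to formal-group cross-terms, the degree $2(p^3-1)$ component of \eqref{eqn3.2} reads $v_2^pt_1\equiv v_1t_2^p$. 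Dividing by the unit $v_2^p$ and writing $t_2=v_2g$ gives the zeroth approximation $t_1\equiv v_1g^p$.

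It remains to evaluate the correction $\delta:=t_1-v_1g^p$, which the displayed equation shows to be $v_2^{-p}$ times the alternating sum of the degree $2(p^3-1)$ formal-group cross-terms in the two $\sum^F$'s (the non-cross contributions having been discarded). I would enumerate these cross-terms and their $v_1$--orders using the shape of the $p$--typical formal group law $F$ modulo $p$ (its leading deviation from additivity is $-v_1\tfrac{1}{p}\bigl((x+y)^p-x^p-y^p\bigr)$, homogeneous of $x,y$--degree $p$, and the subsequent ones involve $v_1^2$ and $v_2$), discard those in $v_1^{2p+3}\wwbar{\Sigma}_2$, feed the approximation $t_1\equiv v_1g^p$ (and $t_2=v_2g$) into the rest, and iterate the approximation one $v_1$--power at a time. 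The expected outcome is that $\delta$ has $v_1$--order $p+2$, that its order $p+2$ part is $-v_1^{p+2}v_2^{-1}g+v_1^{p+2}v_2^{-1}g^p$, and that all parts of order $p+3$ through $2p+2$ vanish, which is the assertion.

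The main obstacle is exactly this last computation: one must keep honest track of $F\bmod p$ over the relevant degrees to be sure that no cross-term of degree $2(p^3-1)$ and $v_1$--order at most $2p+2$ is overlooked (the truncation to power series degree $p^4$ guarantees nothing was dropped from \eqref{eqn3.2}, but the iterated $F$--sums must still be expanded carefully), and one must verify the cancellation of the low-order ($v_1$--order between $2$ and $p+1$) contributions that leaves the clean two-term answer. Everything else — the degree bookkeeping, the reductions via $t_1,t_3\in v_1\wwbar{\Sigma}_2$ and the formula for $\eta_R(v_2)$, the divisions by the units $v_2$ and $v_2^p$, and the substitution $t_2=v_2g$ — is routine manipulation in $\wwbar{\Sigma}_2=\Sigma_2/p\Sigma_2$.
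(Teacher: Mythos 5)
Your setup is fine as far as it goes: extracting the degree $2(p^3-1)$ component of the relation behind \eqref{eqn3.2} is the right relation to use (Ravenel's formula for $\eta_R(v_3)$ mod $p$ is exactly the closed form of that component), and your leading-order analysis $v_2^pt_1\equiv v_1t_2^p$, hence $t_1\equiv v_1g^p$, is correct. But the argument stops where the lemma begins. The entire content of the statement is the identification of the corrections of $v_1$--order between $p+2$ and $2p+2$, and you only assert the ``expected outcome'' that the order $p+2$ part is $-v_1^{p+2}v_2^{-1}g+v_1^{p+2}v_2^{-1}g^p$ and that nothing else survives below order $2p+3$; you yourself flag the enumeration of the formal-group cross-terms as the main obstacle and do not carry it out. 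Note also that cross-terms of $F$ mod $p$ whose coefficients are powers of $v_2$ have $v_1$--order zero, so your remark that the higher deviations ``involve $v_1^2$ and $v_2$'' does not by itself let you discard anything; their harmlessness must come from the positive $v_1$--orders of the entries being combined ($t_1,t_3\in v_1\wwbar{\Sigma}_2$, $\eta_R(v_2)-v_2\in v_1^{p+1}\wwbar{\Sigma}_2$, etc.), and that is precisely the bookkeeping you defer. As it stands this is a plan, not a proof.

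For comparison, the paper short-circuits the expansion entirely: it quotes Ravenel's closed formula (Corollary 4.3.21) for $\eta_R(v_3)$ in $\bps BP/p\bps BP$, whose $w_1$--term packages all the cross-terms you would have to enumerate; setting $\eta_R(v_3)=0=v_3$ in $\wwbar{\Sigma}_2$ and reducing modulo $v_1^{2p+3}$ gives \eqref{eqn3.4}, $0=v_1t_2^p-v_2^pt_1-v_1^2v_2^{p-1}t_1^p+v_1^{p+1}v_2^{p-1}t_1$. A first pass gives $t_1=v_2^{-p}v_1t_2^p$ mod $v_1^{p+2}\wwbar{\Sigma}_2$; substituting this into the last two terms produces a $t_2^{p^2}$, and---a step your outline omits---\fullref{lemma3.1} is then needed to convert $v_1^{p+2}v_2^{-p^2+p-1}t_2^{p^2}$ into $v_1^{p+2}v_2^{p-2}t_2$ mod $v_1^{2p+3}\wwbar{\Sigma}_2$, which is where the $g$ (rather than $g^{p^2}$) term in the answer comes from; the relation $t_2^{p^2}=v_2^{p^2-1}t_2$ holds only modulo $I_2$, not in $\wwbar{\Sigma}_2$, so it cannot simply be ``fed in'' with $t_2=v_2g$. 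To complete your route you must either reprove Ravenel's formula via the cross-term expansion you describe or cite it as the paper does, and in either case carry out the two substitution steps above, including the use of \fullref{lemma3.1}; that is the missing core of the argument.
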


\begin{proof}
From Ravenel \cite[Corollary 4.3.21]{13},
$$\eta_R(v_3)=
v_3{+}v_2t^{p^2}_1\!\!{+}v_1t^p_2{-}v^p_2t_1{-}v^{p^2}_1\!t_2
  {-}v^p_1t^{1{+}`p^2}_1
  \!\!{+} v^{p^2}_1\!t^{1{+}`p}_1{+}v_1w_1\bigl(v_2,v_1t^p_1,
`{-}v^p_1t_1\bigr)$$
in $\bps BP/p\bps BP$,
where $w_1(x,y,z)\equiv\frac{1}{p}[x^p+y^p+z^p-(x+y+z)^p]$. Hence
\setcounter{equation}{3}
\begin{equation}\label{eqn3.4}
0=v_1t^p_2-v^p_2t_1-v^2_1v^{p-1}_2t^p_1 + v^{p+1}_1v^{p-1}_2t_1\quad\mod
v^{2p+3}_1\wwbar{\Sigma}_2,
\end{equation}
and thus
\[
t_1=v^{-p}_2v_1t^p_2\quad\mod v^{p+2}_1\wwbar{\Sigma}_2.
\]
Plug this relation for $t_1$ back into the last two terms of \eqref{eqn3.4} to
get 
\[
v^p_2t_1=v_1t^p_2-v^{p+2}_1v^{-p^2+p-1}_2t^{p^2}_2+v^{p+2}_1v^{-1}_2t^p_2\quad\mod
v^{2p+3}_1\wwbar{\Sigma}_2.
\]
By the previous lemma,
\[
v^{p+2}_1v^{-p^2+p-1}_2t^{p^2}_2=v^{p+2}_1v^{p-2}_2t_2\quad\mod
v^{2p+3}_1\wwbar{\Sigma}_2.
\]
We then get the desired result.
\end{proof}

The next propositions will allow us to compute the Bockstein differentials
on $v^k_2\in H^0(\F_{p^2}[u,u^{-1}])$. 

\setcounter{thm}{4}
\begin{prop}\label{prop3.5}
In $\wwbar{\Sigma}_2/v^{3p+3}_1\wwbar{\Sigma}_2$, 
\begin{multline*}
\eta_R(v^s_2)-v^s_2=\\ sv^s_2\bigl[v^{-1}_2 v^{1+p}_1(g^{p^2}-g^p)+v^{-2}_2
v^{2(1+p)}_1 (g-g^p)+\tfrac{s-1}{2}v^{-2}_2 v^{2(1+p)}_1
(g^{p^2}-g^p)^2\bigr].
\end{multline*}
\end{prop}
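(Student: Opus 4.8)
The plan is to compute the case $s=1$ directly and then obtain general $s$ by raising $\eta_R(v_2)$ to the $s^{\mathrm{th}}$ power. For $s=1$ I would start from the congruence $\eta_R(v_2)\equiv v_2+v_1t_1^p-v_1^pt_1$ in $\bps BP/p\bps BP$ noted in the proof of \fullref{lemma3.1}; since $\eta_R$ and $\eta_L$ agree on $W\F_{p^2}$ (see \fullref{sec1}), it descends to an identity in $\wwbar{\Sigma}_2$. By \fullref{lemma3.3}, $C:=t_1-v_1g^p$ lies in $v_1^{p+2}\wwbar{\Sigma}_2$, so, since $\wwbar{\Sigma}_2$ is commutative of characteristic $p$, $t_1^p=(v_1g^p+C)^p=v_1^pg^{p^2}+C^p$; as $C^p\in v_1^{p(p+2)}\wwbar{\Sigma}_2$ and $1+p(p+2)=(p+1)^2\geq 3p+3$, this gives $v_1t_1^p\equiv v_1^{p+1}g^{p^2}\ \mod v_1^{3p+3}\wwbar{\Sigma}_2$. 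On the other hand \fullref{lemma3.3} gives $v_1^pt_1\equiv v_1^{p+1}g^p-v_1^{2p+2}v_2^{-1}g+v_1^{2p+2}v_2^{-1}g^p\ \mod v_1^{3p+3}\wwbar{\Sigma}_2$. Subtracting,
\[
\delta:=\eta_R(v_2)-v_2\equiv v_1^{1+p}(g^{p^2}-g^p)+v_1^{2(1+p)}v_2^{-1}(g-g^p)\quad\mod v_1^{3p+3}\wwbar{\Sigma}_2,
\]
which is the assertion for $s=1$.

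For arbitrary $s\in\Z$ I would use that $\eta_R$ is a ring homomorphism and $v_2$ is invertible, so $\eta_R(v_2^s)=(v_2+\delta)^s=v_2^s(1+v_2^{-1}\delta)^s$. Since $\delta\in v_1^{1+p}\wwbar{\Sigma}_2$, in the quotient $\wwbar{\Sigma}_2/v_1^{3p+3}\wwbar{\Sigma}_2$ one has $(v_2^{-1}\delta)^k=0$ for all $k\geq 3$, so the binomial expansion of $(1+v_2^{-1}\delta)^s$ collapses to the finite sum $1+sv_2^{-1}\delta+\tfrac{s(s-1)}{2}v_2^{-2}\delta^2$, the coefficients being ordinary integers; hence
\[
\eta_R(v_2^s)-v_2^s\equiv sv_2^{s-1}\delta+\tfrac{s(s-1)}{2}v_2^{s-2}\delta^2\quad\mod v_1^{3p+3}\wwbar{\Sigma}_2.
\]
Squaring the formula for $\delta$ and discarding the cross term $2v_1^{1+p}(g^{p^2}-g^p)\cdot v_1^{2(1+p)}v_2^{-1}(g-g^p)$, which lies in $v_1^{3(1+p)}\wwbar{\Sigma}_2$, leaves $\delta^2\equiv v_1^{2(1+p)}(g^{p^2}-g^p)^2\ \mod v_1^{3p+3}\wwbar{\Sigma}_2$. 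Substituting the expressions for $\delta$ and $\delta^2$ and extracting $sv_2^s$ as a common factor (via $\tfrac{s(s-1)}{2}=s\cdot\tfrac{s-1}{2}$ and $v_2^{s-2}=v_2^sv_2^{-2}$) yields exactly the stated identity.

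Essentially all of the work is $v_1$--adic bookkeeping, and the point to watch is that every discarded contribution --- the powers $(v_2^{-1}\delta)^k$ for $k\geq 3$, the cross term in $\delta^2$, and the error from \fullref{lemma3.3} knowing $t_1$ only modulo $v_1^{2p+3}$ --- genuinely lands in $v_1^{3p+3}\wwbar{\Sigma}_2$. The one step that is not a crude order estimate is the treatment of $v_1t_1^p$: there one uses that $\wwbar{\Sigma}_2$ has characteristic $p$, so the Frobenius $x\mapsto x^p$ is additive, in order to kill the cross term $p\,(v_1g^p)^{p-1}C$, which (absent its factor of $p$) would contribute to $v_1t_1^p$ in $v_1$--order $2(1+p)<3(1+p)$. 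The standing assumption $p>2$ is used, for instance, to give meaning to the coefficient $\tfrac{s-1}{2}$ in $\wwbar{\Sigma}_2$.
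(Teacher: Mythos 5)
Your proof is correct and follows essentially the same route as the paper: both rest on $\eta_R(v_2)\equiv v_2+v_1t_1^p-v_1^pt_1$ mod $p$, the expansion of $(1+v_2^{-1}\delta)^s$ through quadratic terms (legitimate since $\delta\in v_1^{p+1}\wwbar{\Sigma}_2$ is nilpotent in the quotient), and substitution of \fullref{lemma3.3}, with the same $v_1$--adic estimates. The only difference is bookkeeping order --- you evaluate $\delta=v_1t_1^p-v_1^pt_1$ in terms of $g$ first and then expand, while the paper expands first and substitutes afterwards --- which changes nothing of substance.
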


\begin{proof}
Compute in $\wwbar{\Sigma}_2/v^{3p+3}_1\wwbar{\Sigma}_2$: 
\begin{eqnarray*} 
\eta_R(v^s_2) - v^s_2 & = & v^s_2[(v^{-1}_2\eta_R(v_2))^s-1] \\
& = & v^s_2[(1+v^{-1}_2v_1t^p_1-v^{-1}_2v^p_1t_1)^s-1] \\
& = &
sv^s_2\bigl[v^{-1}_2(v_1t^p_1-v^p_1t_1)+\tfrac{s-1}{2}v^{-2}_2(v_1t^p_1-v^p_1t_1)^2\bigr]
\\
& = &
sv^s_2\bigl[v^{-1}_2v^{p+1}_1g^{p^2}-v^{-1}_2v^{p+1}_1g^p+v^{-2}_2v^{2(p+1)}_1
(g-g^p)\\
&&\qquad + \tfrac{s-1}{2}v^{-2}_2v^{2(p+1)}_1 (g^{p^2}-g^p)^2\bigr]
\end{eqnarray*}
by the previous lemma.
\end{proof}

\begin{prop}\label{prop3.6}
Suppose that $p\not|\;s$, that $k\ge 0$, and let
$$b(sp^k)=sp^k-p^k-p^{k-1}-\cdots-1.$$
Then there exists $z_{sp^k}\in
E_{2^\ast}$ such that $z_{sp^k}=v^{sp^k}_2\mod I_2$ and such that
\[
dz_{sp^k}\equiv \eta_R(z_{sp^k})-z_{sp^k}=cv^{b(sp^k)}_2
v^{(p^k+p^{k-1}+\cdots+1)(p+1)}_1 (g^p-g)
\]
in $\wwbar{\Sigma}_2/v^{(p^k+p^{k-1}+\cdots+p+2)(p+1)}_1
\wwbar{\Sigma}_2$, for some $c\in\F^\times_p$.
\end{prop}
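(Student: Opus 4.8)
The plan is to induct on $k$, using \fullref{prop3.5} for the base case $k=0$ and a ``$p$--th power bootstrap'' for the inductive step. For $k=0$ we have $b(s)=s-1$ and we want $z_s=v_2^s$ itself: \fullref{prop3.5} gives $\eta_R(v_2^s)-v_2^s$ modulo $v_1^{3p+3}\wwbar{\Sigma}_2$ as a sum of three terms, and modulo $v_1^{2(p+1)}\wwbar{\Sigma}_2$ only the first survives, namely $sv_2^{s-1}v_1^{p+1}(g^{p^2}-g^p)$. Since $p\nmid s$ and $g^{p^2}=g$ in $\wwbar{\Sigma}_2/v_1\wwbar{\Sigma}_2$ — more precisely $t_2^{p^2}=v_2^{p^2-1}t_2$ modulo $v_1^{p+1}$ by \fullref{lemma3.1}, so $g^{p^2}\equiv g$ to high enough $v_1$--order — we may rewrite $g^{p^2}-g^p=(g^{p^2}-g)+(g-g^p)$ and absorb $g^{p^2}-g$ into the error term, leaving $c\,v_2^{s-1}v_1^{p+1}(g-g^p)=(-c)v_2^{b(s)}v_1^{(p+1)}(g^p-g)$ with $c=s\in\F_p^\times$. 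This matches the claimed formula with the truncation exponent $(p+2)(p+1)$, which is $\le 3p+3$ only when $p=3$; for larger $p$ one checks $(p+2)(p+1)\le 3p+3$ fails, so one actually wants to re-examine \fullref{prop3.5}'s range — but since $2(p+1)\le (p+2)(p+1)$ always, and the second/third terms of \fullref{prop3.5} are $O(v_1^{2(p+1)})$, working modulo $v_1^{2(p+1)}\wwbar{\Sigma}_2$ suffices for $k=0$ regardless.

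For the inductive step, suppose $z_{sp^{k-1}}$ has been constructed with the stated property, and set $z_{sp^k}=z_{sp^{k-1}}^p$. Then $\eta_R(z_{sp^k})-z_{sp^k}=\eta_R(z_{sp^{k-1}})^p-z_{sp^{k-1}}^p=(z_{sp^{k-1}}+dz_{sp^{k-1}})^p-z_{sp^{k-1}}^p$. Expanding the $p$--th power in characteristic $p$, the binomial coefficients $\binom{p}{j}$ for $0<j<p$ vanish, so this equals $(dz_{sp^{k-1}})^p$ plus a sum of terms each divisible by $p\cdot z_{sp^{k-1}}^{p-1}dz_{sp^{k-1}}\cdots$ — but we are already mod $p$, so in fact $\eta_R(z_{sp^k})-z_{sp^k}=(dz_{sp^{k-1}})^p$ exactly in $\wwbar\Sigma_2$. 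By induction $dz_{sp^{k-1}}=c'v_2^{b(sp^{k-1})}v_1^{(p^{k-1}+\cdots+1)(p+1)}(g^p-g)$ modulo $v_1^{(p^{k-1}+\cdots+p+2)(p+1)}\wwbar\Sigma_2$; raising to the $p$--th power gives $(c')^p v_2^{p\,b(sp^{k-1})}v_1^{p(p^{k-1}+\cdots+1)(p+1)}(g^{p^2}-g^p)$ modulo $v_1^{p(p^{k-1}+\cdots+p+2)(p+1)}\wwbar\Sigma_2$. Now $p\cdot b(sp^{k-1})=p(sp^{k-1}-p^{k-1}-\cdots-1)=sp^k-p^k-\cdots-p=b(sp^k)+1$ and $p(p^{k-1}+\cdots+1)=p^k+\cdots+p=(p^k+\cdots+1)-1$, so the exponents of $v_2$ and $v_1$ are each one less than the target; the discrepancy is absorbed by the relation $v_2^{p^2-1}t_2=t_2^{p^2}$ (i.e. $g^{p^2}=v_2^{p^2-1}\cdot v_2^{-p^2}t_2=$ matches up), equivalently by using \fullref{lemma3.1} once more to trade $v_2^{-1}g^{p^2}$ for $g$ up to $v_1^{p+1}$. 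One also needs $g^{p^2}-g^p=(g^{p^2}-g)+(g-g^p)$, discarding $g^{p^2}-g$ into the error as before, so that the leading term becomes a unit multiple of $v_2^{b(sp^k)}v_1^{(p^k+\cdots+1)(p+1)}(g^p-g)$, and $p\cdot(p^{k-1}+\cdots+p+2)(p+1)\ge (p^k+\cdots+p+2)(p+1)$ so the truncation is at least as fine as required.

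The main obstacle is bookkeeping the $v_1$--adic filtration through the $p$--th power map: squaring-to-the-$p$ multiplies the filtration exponent by $p$, which overshoots the target truncation, so one must carefully verify that the ``extra'' $v_1$--divisibility is genuinely available and that the correction terms (coming from $g^{p^2}\neq g$ and from the off-leading terms of \fullref{prop3.5} and \fullref{lemma3.3}) all land in filtration at least $(p^k+\cdots+p+2)(p+1)$. In particular one should track that $dz_{sp^{k-1}}$, beyond its leading term, has \emph{all} contributions in filtration $\ge(p^{k-1}+\cdots+p+2)(p+1)$, so that their $p$--th powers and cross terms are harmless; this requires knowing \fullref{prop3.5} and \fullref{lemma3.3} to slightly higher $v_1$--order than their stated truncations, which is why those lemmas were proved modulo $v_1^{3p+3}$ and $v_1^{2p+3}$ rather than just to the order needed for the $k=0$ case. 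The constant $c\in\F_p^\times$ is then $s$ times a product of $p$--th powers of earlier units, hence nonzero, which is all that is asserted.
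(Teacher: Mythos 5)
Your base case is essentially the paper's (note that for $k=0$ the stated modulus is $2(p+1)$, since the sum $p^k+\cdots+p+2$ collapses to $2$; your worry about $(p+2)(p+1)$ versus $3p+3$ is moot), but the inductive step has a genuine gap: taking $z_{sp^k}=(z_{sp^{k-1}})^p$ with no correction does not satisfy the asserted congruence. As you compute, $d\bigl((z_{sp^{k-1}})^p\bigr)=(dz_{sp^{k-1}})^p$ has leading term $c\,v_2^{pb(sp^{k-1})}v_1^{p(p^{k-1}+\cdots+1)(p+1)}(g^{p^2}-g^p)=c\,v_2^{b(sp^k)+1}v_1^{(p^k+\cdots+p)(p+1)}(g^{p^2}-g^p)$, that is, one \emph{extra} power of $v_2$ (not one fewer) and $v_1$--filtration lower than the target by $p+1$. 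Your proposed repair --- invoking \fullref{lemma3.1} ``once more'' to trade $v_2^{-1}g^{p^2}$ for $g$ --- does not exist: \fullref{lemma3.1} only gives $g^{p^2}\equiv g$ modulo $v_1^{p+1}\wwbar{\Sigma}_2$, which replaces $g^{p^2}-g^p$ by $g-g^p$ up to terms of higher filtration but neither raises the $v_1$--filtration of the term in question nor lowers its $v_2$--exponent; no identity in $\wwbar{\Sigma}_2$ trades a power of $v_2$ for extra $v_1$--divisibility. Since $g-g^p$ is nonzero modulo $v_1$, the term $c\,v_2^{b(sp^k)+1}v_1^{(p^k+\cdots+p)(p+1)}(g-g^p)$ survives in filtration strictly below $(p^k+\cdots+1)(p+1)$, so the congruence claimed in the proposition is simply false for the uncorrected $p$--th power.

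The missing idea is that the \emph{element} must be corrected, not just the formula massaged. The paper sets $z_{sp^k}=(z_{sp^{k-1}})^p-c\,v_1^{(p^k+\cdots+p-1)(p+1)}v_2^{b(sp^k)+2}$. Because $b(sp^k)+2\equiv 1$ modulo $p$, \fullref{prop3.5} (this is exactly why it records both the $v_1^{p+1}$ and the $v_1^{2(p+1)}$ terms, to order $3p+3$) gives $d\bigl(v_2^{b(sp^k)+2}\bigr)=v_2^{b(sp^k)+1}v_1^{p+1}(g^{p^2}-g^p)+v_2^{b(sp^k)}v_1^{2(p+1)}(g-g^p)$ modulo $v_1^{3(p+1)}\wwbar{\Sigma}_2$. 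After multiplying by $c\,v_1^{(p^k+\cdots+p-1)(p+1)}$, the first term cancels the offending leading term of $(dz_{sp^{k-1}})^p$ on the nose, and the second supplies, after the sign, exactly $c\,v_2^{b(sp^k)}v_1^{(p^k+\cdots+1)(p+1)}(g^p-g)$; all remaining errors --- the $p$--th power of the inductive error, at filtration $p(p^{k-1}+\cdots+p+2)(p+1)\ge(p^k+\cdots+p+2)(p+1)$, and the error from \fullref{prop3.5}, at $(p^k+\cdots+p-1)(p+1)+3(p+1)=(p^k+\cdots+p+2)(p+1)$ --- lie at or above the stated truncation. Since the correction term is divisible by $v_1$, the condition $z_{sp^k}\equiv v_2^{sp^k}$ modulo $I_2$ is preserved, and the unit $c$ persists. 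Without this correction your induction does not close.
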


\begin{proof}
Proceed by induction on $k$. If $k=0$, let $z_s=v^s_2$. Then by the preceding
proposition,
\[
\eta_R(z_s)-z_s=sv^{s-1}_2v^{p+1}_1(g^{p^2}-g^p)\mod
v^{2(p+1)}_1\wwbar{\Sigma}_2.
\]
But $g^{p^2}=g\mod v^{p+1}_1\wwbar{\Sigma}_2$, so we get the desired result.

Suppose now that $z_{sp^{k-1}}$ has been chosen. Then 
\begin{multline*}
d(z_{sp^{k-1}})^p=cv_2^{pb(sp^{k-1})}v^{(p^k+p^{k-1}+\cdots+p)(p+1)}_1
(g^{p^2}-g^p)\\
\bigl(\mod v^{(p^k+p^{k-1}+\cdots+p^2+2p)(p+1)}_1\wwbar{\Sigma}_2\bigr).
\end{multline*}
Next consider $dv^{(s-1)p^k-p^{k-1}-\cdots-p+1}_2$. Since the exponent of
$v_2$ is equal to $1\mod(p)$, we have
\[
d(v^{b(sp^k)+2}_2)=v^{b(sp^k)+1}_2v^{1+p}_1(g^{p^2}-g^p) + v^{b(sp^k)}_2
v^{2(1+p)}_1(g-g^p)\qua\mod v^{3p+3}_1\wwbar{\Sigma}_2.
\]
Then take
\[
z_{sp^k}=(z_{sp^{k-1}})^p-cv_1^{(p^k+p^{k-1}+\cdots+p-1)(p+1)}v_2^{(s-1)p^k
- p^{k-1}-\cdots-p+1}.\proved
\]
\end{proof}

\begin{cor}\label{cor3.7}
Suppose that $p\not|s$ and $k\ge 0$. Up to multiplication by a unit in
$\fp$,
\[
d_{(p^k+p^{k-1}+\cdots+1)(p+1)} v_2^{sp^k} =
v_2^{(s-1)p^k-p^{k-1}-\cdots-p-1}(g^p-g)
\]
in the Bockstein spectral sequence \eqref{eqn2.1}. In particular, $v^t_2(g^p-g)$ is a
boundary for all $t\in\Z$.
\end{cor}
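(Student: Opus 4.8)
The plan is to read off the Bockstein differential directly from \fullref{prop3.6}. That proposition produces, for each $s$ prime to $p$ and each $k\ge 0$, an element $z_{sp^k}\in E_{2\ast}$ which is a lift of $v_2^{sp^k}$ modulo $I_2$, hence represents the class $v_2^{sp^k}\in H^0(\F_{p^2}[u,u^{-1}])$ in the bottom term of the exact couple \eqref{eqn2.1}; and it computes $dz_{sp^k}$ modulo $v_1^{(p^k+\cdots+p+2)(p+1)}\wwbar{\Sigma}_2$. The first step is to match this with the general mechanism of a Bockstein spectral sequence built from a power of a single element (here $v_{n-1}=v_1$ in the $n=2$ case): if a permanent-cycle-so-far class $x$ in the bottom row has a lift $\tilde x$ whose differential $d\tilde x$ is divisible by exactly $v_1^{m}$ with quotient $y$ (and $y\ne 0$ in $H^\ast(\ena/I_2)$), then $x$ supports a $d_m$ with $d_m x = y$ up to the obvious unit. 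So the job reduces to identifying the exact power of $v_1$ dividing the right-hand side of \fullref{prop3.6}.

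Carrying that out: \fullref{prop3.6} gives
\[
dz_{sp^k}\equiv c\,v_2^{b(sp^k)}\,v_1^{(p^k+p^{k-1}+\cdots+1)(p+1)}(g^p-g)
\]
modulo $v_1^{(p^k+\cdots+p+2)(p+1)}\wwbar{\Sigma}_2$, with $c\in\F_p^\times$ and $b(sp^k)=sp^k-p^k-\cdots-1=(s-1)p^k-p^{k-1}-\cdots-p-1$. Thus the power of $v_1$ appearing is $(p^k+p^{k-1}+\cdots+1)(p+1)$, and the stated modulus is precisely one ``notch'' higher (the exponent is raised by $(p+1)$, i.e.\ by one in the relevant $v_1$-adic filtration of the Bockstein), so no higher-$v_1$ correction term can interfere. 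One must also check that the coefficient $v_2^{b(sp^k)}(g^p-g)$ is nonzero in $H^\ast(\ena/I_2)=\fp[v_2,v_2^{-1}]\otimes E(g_0,g_1)$ (here using \eqref{eqn1.3} with $n=2$, and $g\mapsto g_1$, $g^p\mapsto g_2=g_0$ from the end of \fullref{sec1}): indeed $g^p-g$ corresponds to $g_0-g_1$, which is a nonzero element of the exterior algebra, and $v_2$ is invertible, so the product is nonzero. Hence $v_2^{sp^k}$ is not hit by any shorter differential and must itself support the differential of length $(p^k+p^{k-1}+\cdots+1)(p+1)$, giving
\[
d_{(p^k+p^{k-1}+\cdots+1)(p+1)}\,v_2^{sp^k}=v_2^{(s-1)p^k-p^{k-1}-\cdots-p-1}(g^p-g)
\]
up to a unit in $\fp$.

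For the final sentence, I would argue that every class of the form $v_2^t(g^p-g)$, $t\in\Z$, is a boundary. Given $t$, choose an integer of the form $(s-1)p^k-p^{k-1}-\cdots-p-1$ congruent to $t$; concretely one can take $k$ large and $s$ suitably chosen so that $(s-1)p^k-(p^{k-1}+\cdots+p+1)\equiv t$, which is possible since for fixed large $k$ the residues $(s-1)p^k \bmod (\text{any desired modulus})$ can be made to hit anything as $s$ ranges over integers prime to $p$ — in fact here there is no congruence restriction at all on $t$ since $v_2$ is a unit and we are free to multiply the relation by any power $v_2^j$, which shifts the exponent $b(sp^k)$ arbitrarily in $\Z$. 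So for a suitable $s,k$ (and then multiplying the displayed differential by an appropriate power of $v_2$) we realize $v_2^t(g^p-g)$ as $d_{(p^k+\cdots+1)(p+1)}$ of a multiple of $v_2^{sp^k}$, so it is a boundary.

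The main obstacle I anticipate is the bookkeeping needed to be certain that the displayed term in \fullref{prop3.6} really is the leading term of the Bockstein differential — i.e.\ that nothing of strictly lower $v_1$-order is lurking and that the coefficient does not vanish after passing to $H^\ast(\ena/I_2)$ and taking $\fpn^\times\rtimes\gal$-invariants. Once one grants \fullref{prop3.6} together with the identification \eqref{eqn1.3} and the fact that $v_1=v_{n-1}$ is the element defining the exact couple \eqref{eqn2.1}, the corollary is essentially a matter of unwinding definitions; the only real content is the nonvanishing of $v_2^{b(sp^k)}(g^p-g)$ in the exterior algebra and the observation that the stated modulus in \fullref{prop3.6} is exactly tight.
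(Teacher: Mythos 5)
Your treatment of the differential itself is correct and is exactly how the paper intends \fullref{cor3.7} to follow from \fullref{prop3.6} (the paper offers no separate proof): the proposition supplies a lift $z_{sp^k}$ of $v_2^{sp^k}$ whose coaction obstruction is exactly divisible by $v_1^{(p^k+\cdots+1)(p+1)}$, the stated modulus is one notch ($(p+1)$) higher, and the leading coefficient $v_2^{b(sp^k)}(g^p-g)$ is visibly nonzero in $\fp[v_2,v_2^{-1}]\otimes E(g,g^p)$ by \eqref{eqn1.3}. (Minor index slip: $g$ represents $g_0$ and $g^p$ represents $g_1$, not the other way around; this does not affect the nonvanishing.)

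The gap is in your justification of the final sentence, that $v_2^t(g^p-g)$ is a boundary for every $t\in\Z$. You may not ``multiply the relation by any power $v_2^j$'': the differentials on page $r=(p^k+\cdots+1)(p+1)$ are $\fp[v_1]$--linear but not $v_2$--linear, and $v_2^j$ is not a cycle on that page --- the corollary itself says $v_2^j$ supports a differential of length at most $r$ --- so $d_r(v_2^jx)\ne v_2^j\,d_rx$ in general, and there \emph{is} a congruence restriction on which exponents can appear as targets of a given $d_r$. Likewise, producing an exponent merely \emph{congruent} to $t$ is not enough; you need equality, and the claim that $(s-1)p^k$ hits every residue class modulo an arbitrary modulus is false when the modulus is divisible by $p$. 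The correct (and still elementary) point is that the exponents $b(sp^k)=(s-1)p^k-p^{k-1}-\cdots-p-1$, for $p\nmid s$ and $k\ge 0$, exhaust $\Z$: $k=0$ gives all $t\not\equiv -1\pmod p$; $k=1$ gives all $t\equiv -1\pmod p$ with $t\not\equiv -1-p\pmod{p^2}$; and in general $k$ gives all $t\equiv -(1+p+\cdots+p^{k-1})\pmod{p^k}$ with $t\not\equiv -(1+p+\cdots+p^{k})\pmod{p^{k+1}}$. Every integer lies in exactly one of these classes, since no rational integer has $p$--adic expansion $-\sum_{i\ge 0}p^i=1/(p-1)$. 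Hence each $v_2^t(g^p-g)$ is the target of exactly one of the displayed differentials --- which is also what the formula for $n_t$ in \fullref{thm3.8} records. With this replacement your argument is complete.
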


Now, since $g^{p^i}$ may be regarded as an element of $H^1_c(H_n,\fpn)$, and
the inclusion
$$\fpn\longrightarrow(E_n)_0/(p,u_1,\ldots, u_{n-2})$$
induces a map
$$H^\ast_c(H_n, \fpn)\longrightarrow H^\ast(\ena/(p_1,u_1,\ldots, u_{n-2})),$$
it follows
that $g^{p^i}$ is a permanent cycle in the Bockstein spectral sequence.
Moreover,
\[
H^\ast(\F_{p^2}[u,u^{-1}]) = \fp[v_2,v^{-1}_2] \otimes E(g+g^p,g-g^p).
\]
Using the preceding corollary, it's easy to read off the remaining
differentials, yielding our main result.

\begin{thm}\label{thm3.8}
If $p\ge 3$
\[
H^i(\F_{p^2}\llbracket u_1\rrbracket [u,u^{-1}]) =
\left\{\begin{array}{l@{\hspace{.5in}}l}
\fp[v_1]\{1\} & i=0 \\
\fp[v_1]\{\zeta\}\times\prod_{t\in\Z} \frac{\fp[v_1]}{(v^{n_t}_1)}\{c_t\} & i=1
\\
\prod_{t\in\Z} \frac{\fp[v_1]}{(v_1^{n_t})}\{c_t\zeta\} & i=2
\end{array}\right.
\]
as $\fp[v_1]$--modules, where $\zeta=g+g^p$, $c_t$ reduces to
$v^t_2(g-g^p)\in H^1(\F_{p^2}[u,u^{-1}])$, and 
\[
n_t=\left\{\begin{array}{l@{\quad}l}
p+1 & t\ne -1\quad\mod(p) \\
(p^i+p^{i-1}+\cdots+1)(p+1) & t=(s-1)p^i-p^{i-1}-\cdots-p-1, \\
&s\ne 0\quad\mod(p)
\end{array}\right.
\]
By sparseness,
\[
\pi_{t-s}(E^{hH_2}_2\wedge M(p)) \approx
H^{s,t}(\F_{p^2}\llbracket u_1\rrbracket [u,u^{-1}]).
\]
\end{thm}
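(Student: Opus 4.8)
The plan is to run the Bockstein spectral sequence \eqref{eqn2.1} from its $E_1$ page to $E_\infty$ and read off the answer. The input is the computation already in hand: by \eqref{eqn1.3} (specialized to $n=2$) we have $H^\ast(\F_{p^2}[u,u^{-1}]) = \fp[v_2,v_2^{-1}]\otimes E(g,g^p)$, and since $p>2$ this may be rewritten (as noted just before the theorem) as $\fp[v_2,v_2^{-1}]\otimes E(\zeta, g-g^p)$ with $\zeta=g+g^p$. So the $E_1$-term is a free module over $\fp[v_2^{\pm1}]\otimes E(\zeta)$ on the two classes $1$ and $g-g^p$ in cohomological degrees $0$ and $1$. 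First I would record that $\zeta$ and $g-g^p$ are permanent cycles: $g^{p^i}\in H^1_c(H_2,\F_{p^2})$ comes from the inclusion $\F_{p^2}\to (E_2)_0/(p)$, hence so do $\zeta$ and $g-g^p$, and these classes survive by naturality. Thus the only possible differentials are generated, via the Leibniz rule, by the differentials on the powers $v_2^k$.

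Next I would feed in \fullref{cor3.7}: for $p\nmid s$ and $k\ge 0$, up to a unit in $\fp$,
\[
d_{(p^k+\cdots+1)(p+1)}\,v_2^{sp^k} = v_2^{(s-1)p^k-p^{k-1}-\cdots-p-1}(g^p-g),
\]
and every other $v_2^k$ (i.e.\ those $k$ not of this form, equivalently those not congruent to $-1\bmod p$ after the obvious bookkeeping) supports the \emph{shortest} differential $d_{p+1}v_2^k = c\,v_2^{k-1}(g^p-g)$ coming from the $k=0$ case of \fullref{prop3.6}. The key combinatorial point is that the assignment $k\mapsto$ (target exponent) and the length $n_t$ in the theorem are exactly bookkeeping of which source $v_2^{k}$ hits which target $v_2^{t}(g-g^p)$ and after how many Bockstein steps; one checks that every integer $t$ arises as a target $(s-1)p^i-p^{i-1}-\cdots-p-1$ for a \emph{unique} pair $(s,i)$ with $p\nmid s$, or else is of the form $k-1$ with $k\not\equiv -1\bmod p$, and the two cases give precisely the two cases of $n_t$. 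Hence in degree $1$ the class $v_2^t(g-g^p)$ is killed exactly at the $E_{n_t}$ page, leaving $\fp[v_1]/(v_1^{n_t})\{c_t\}$; the class $\zeta$ (and $v_2^0\cdot 1 = 1$) survives to give the free summands $\fp[v_1]\{1\}$ in degree $0$ and $\fp[v_1]\{\zeta\}$ in degree $1$; and the degree $2$ classes $v_2^t(g-g^p)\zeta$ are not sources of any differential (everything in degree $2$ is $\zeta$ times a degree $1$ class) but each is a target of $d_{n_t}$ applied to $v_2^{k}\zeta$, leaving $\fp[v_1]/(v_1^{n_t})\{c_t\zeta\}$. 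I would also note the strong convergence already established in \fullref{sec2}, so these associated graded pieces assemble (with no extension ambiguity, since everything in each bidegree is a finite-length $\fp[v_1]$-module) into the stated $H^\ast(\F_{p^2}\llbracket u_1\rrbracket[u,u^{-1}])$.

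Finally, for the homotopy statement I would invoke the continuous homotopy fixed point spectral sequence $H^s_c(H_2, (E_2)^t M(p))\Rightarrow \pi_{t-s}(E_2^{hH_2}\wedge M(p))$ from the introduction, noting that $(E_2)_\ast M(p) = \F_{p^2}\llbracket u_1\rrbracket[u,u^{-1}]$ and that the computation above shows this spectral sequence is concentrated in filtrations $s=0,1,2$. By sparseness of the $v_2$-periodic pattern (the classes live in degrees that are multiples of $|v_2|=2(p^2-1)$ up to the exterior generators, while the filtration jumps are bounded), there is no room for differentials or additive extensions, so the edge map is an isomorphism $\pi_{t-s}(E_2^{hH_2}\wedge M(p))\approx H^{s,t}(\F_{p^2}\llbracket u_1\rrbracket[u,u^{-1}])$.

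The main obstacle I expect is not any single hard estimate but the combinatorial accounting in the second paragraph: one must verify that the differentials of \fullref{cor3.7}, together with the length-one differentials, partition the degree-$1$ and degree-$2$ classes into source/target pairs with \emph{exactly} the multiplicities $n_t$ claimed, i.e.\ that no class is hit twice, no class supports two differentials, and that the two clauses defining $n_t$ are exhaustive and mutually exclusive over $t\in\Z$. This is a finite check per residue class mod $p$ and a telescoping induction on $k$, but it is where an error would most easily hide.
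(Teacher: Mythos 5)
Your proposal follows the paper's own argument exactly: it feeds \eqref{eqn1.3} and the permanent cycles $g^{p^i}$ into the Bockstein spectral sequence of \fullref{sec2}, applies \fullref{cor3.7} to obtain all the differentials, and finishes with sparseness in the homotopy fixed point spectral sequence, which is precisely the route the paper takes (leaving the final bookkeeping as ``easy to read off''). One small slip to correct in your accounting: the sources of the shortest differential $d_{p+1}$ are the powers $v_2^k$ with $k\not\equiv 0 \pmod p$ (so the targets $v_2^{k-1}(g^p-g)$ are those with exponent $\not\equiv -1 \pmod p$), not $k\not\equiv -1\pmod p$ as written; with that fixed, the partition of targets matches the stated $n_t$.
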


\begin{remark}\label{rem3.9}
Let $I_n$ denote the Brown--Comenetz dual of $L_nS^0$, the
$\ena$--localization of $S^0$. $I_n$ is characterized by
\[
\pi_0F(X,I_n)=[X,I_n]_0=\Hom(\pi_0L_nX,\Q/\Z_{(p)})
\]
for any spectrum $X$. In \cite{10} (see also Strickland \cite{19}), Gross and Hopkins
establish a remarkable relationship between Brown--Comenetz and
Spanier--Whitehead duality: they prove that if $p$ is sufficiently large
compared to $n\ge 2$, and if $X$ is a $K(n{-}1)_\ast$--acyclic finite complex
with $p\ena X=0$ and with $v_n$ self-map $\Sigma^{2p^N(p^n-1)}X\to X$, then 
\setcounter{equation}{9}
\begin{equation}\label{eqn3.10}
F(X,I_n)\simeq\Sigma^\alpha L_nDX,
\end{equation}
where $\alpha$ is any integer with
\[
\alpha=2p^{nN}(p^n-1/p-1) + n^2-n\quad\mod(2p^N(p^n-1)).
\]
(As usual, $DX$ denotes the Spanier--Whitehead dual of $X$.) There is,
however, no integer $\alpha$ for which \eqref{eqn3.10} is satisfied for
\emph{all} $X$. This contrasts with the situation when $n=1$: here we have
$I_1\simeq \Sigma^2L_1(S^0_p)$ (if $p>2$), where $S^0_p$ denotes $S^0$
completed at $p$, and thus $F(X,I_1)\simeq\Sigma^2L_1DX$ whenever $X$ is a
rationally acyclic finite spectrum.

Historically, it was Shimomura's calculation \cite{15} of $\pi_\ast L_2M$
which shattered the hope that $I_2$ might also be an integral suspension of
$L_2(S^0_p)$. Our calculation of $\pi_\ast(E^{hH_2}_2\wedge M(p))$ yields
this result as well; a sketch of the proof follows.

Suppose there existed an integer $c$ with 
\begin{equation}\label{eqn3.11}
F(M(p,v^k_1),I_2) \simeq \Sigma^cL_2DM(p,v^k_1)
\end{equation}
for a cofinal set of $k$, where $M(p,v^k_1)$ denotes a finite spectrum with
\[
\bps M(p,v^k_1)=\bps/(p,v^k_1).
\]
In addition, we may assume that
\[
DM(p,v^k_1)\simeq\Sigma^{-2k(p-1)-2}M(p,v^k_1).
\]
Let
\[
E_{2\ast}M(p,v^k_1)^{\hbox{\footnotesize$\sim$}} = \Hom(E_{2\ast}M(p,v^k_1),\Q/\Z_{(p)}),
\]
and recall that
\[
\Sigma^4(E_{2\ast}M(p,v^k_1)^{\hbox{\footnotesize$\sim$}})\approx E_{2\ast}F(M(p,v^k_1),I_2)
\]
as modules over $E_{2\ast}$ and $G_2$. (See Strickland \cite[Proposition 17]{19} or
Devinatz \cite{2} for $p\ge 5$; note, however, that we are using Strickland's
equivalent
definition of $E_{2\ast}M(p,v^k_1)^{\hbox{\footnotesize$\sim$}}$.) Then
\eqref{eqn3.11} implies that
\[
E_{2\ast}M(p,v^k_1)^{\hbox{\footnotesize$\sim$}}\approx\Sigma^{c-2k(p-1)-6}E_{2\ast}M(p,v^k_1),
\]
and, by the theory of Poincar\'e pro--$p$ groups (cf.\ Devinatz and Hopkins \cite[Sections 5,
6]{6}), there is a map
\[
H^{2,6+2k(p-1)-c}(E_{2\ast}M(p,v^k_1))\to\Q/\Z_{(p)}
\]
such that
\[
H^i(E_{2\ast}M(p,v^k_1)) \otimes H^{2-i}(E_{2\ast}M(p,v^k_1))\to
H^2(E_{2\ast}M(p,v^k_1))\to\Q/\Z_{(p)}
\]
is a perfect pairing. Hence there must exist
a $d_k
\in H^{2,6+2(p-1)-c}(E_{2\ast}M(p,v^k_1))$, for each $k$, such that $v^{k-1}_1d_k\ne 0$.
But the computation of $H^2(\F_{p^2}\llbracket u_1\rrbracket[u,u^{-1}])$ together
with the exact sequence
\[
H^2(\F_{p^2}\llbracket u_1\rrbracket [u,u^{-1}])\mathop{\longrightarrow}\limits^{v^k_1}
H^2(\F_{p^2}\llbracket u_1\rrbracket [u,u^{-1}]) \to H^2(E_{2\ast}M(p,v^k_1))\to 0
\]
shows that this is impossible.
\end{remark}

\section{Some remarks on finiteness}
\label{sec4}

\setcounter{equation}{0}

In this section, we work in the $\ena$--local stable category, so that by a
finite spectrum, we mean an object of the thick subcategory generated by
$L_nS^0$.

Let $G$ be a closed subgroup of $\gn$, $n\ge 1$.

\begin{prop}\label{prop4.1}
Let $Y$ be a $K(n{-}1)_\ast$--acyclic finite spectrum. Then
$\pi_\ast(E^{hG}_n\wedge Y)$ is of finite type (as a graded abelian group).
\end{prop}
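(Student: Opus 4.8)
The plan is to reduce to a statement about continuous cohomology via the continuous homotopy fixed point spectral sequence
\[
H^\ast_c(G,\ena Y)\Rightarrow\pi_\ast(\en^{hG}\wedge Y),
\]
which converges strongly. Since $Y$ is $K(n{-}1)_\ast$--acyclic and finite, a thick subcategory argument shows that $v_{n-1}$ acts nilpotently on $\ena Y$; combined with the fact that $Y$ is finite, $\ena Y$ is a finitely generated module over $W\fpn\llbracket u_1,\ldots,u_{n-2}\rrbracket[u,u^{-1}]$ on which $p,u_1,\ldots,u_{n-2}$ also act topologically nilpotently in the relevant sense — in other words $\ena Y$ is built from finitely many copies of $\ena/I_n$ by extensions, so it is a finite $E_{n\ast}/I_n$--module up to finite filtration. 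The point of reducing mod $I_n$ is that $H^\ast_c(G,\ena/I_n\otimes(\text{finite}))$ is then a module over $H^\ast_c(G,\fpn[u,u^{-1}])$, which is finite in each bidegree (it is finitely generated over $\fp[v_n^{\pm1}]$, e.g. by the computation in \fullref{sec1} when $G=\hn$, and in general because $\sn^0$ is a compact $p$--adic analytic group of finite cohomological dimension and hence of type $p$-$FP_\infty$, so its continuous cohomology with finite coefficients is finite in each degree). Each bidegree of $H^\ast_c(G,\ena/I_n)$ together with the $v_n$--periodicity then gives finiteness in each bidegree of $H^\ast_c(G,\ena Y/I_n)$, and running up the finite filtration of $\ena Y$ keeps this property.

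Next I would run the Bockstein / $I_n$--adic argument of \fullref{sec2}: the tower of short exact sequences
\[
0\to\ena Y/(p,\ldots,u_{n-2},u_{n-1}^{j})\to\ena Y/(p,\ldots,u_{n-2},u_{n-1}^{j+1})\to\ena Y/I_n\to 0,
\]
and then the analogous towers for the remaining generators of $I_n$, give spectral sequences (each strongly convergent, since each bigraded piece is finite in each bidegree) computing $H^\ast_c(G,\ena Y)$ from $H^\ast_c(G,\ena Y/I_n)$. Because every page is finite in each bidegree and the filtration is exhausted by the completeness of $\ena Y$, the limit $H^\ast_c(G,\ena Y)$ is finitely generated over $\Z_p$ in each bidegree, i.e.\ of finite type. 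Finally, feed this into the continuous homotopy fixed point spectral sequence: its $E_2$--page is of finite type in each bidegree, and by the horizontal vanishing line argument (the $E_\infty$ term has a horizontal vanishing line, as recalled in the introduction — this is exactly what makes the spectral sequence collapse at a finite page in each total degree), only finitely many terms contribute to each $\pi_k(\en^{hG}\wedge Y)$. Strong convergence then forces $\pi_\ast(\en^{hG}\wedge Y)$ to be of finite type.

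The main obstacle is controlling the passage from finite-type $E_2$ (over $\Z_p$, in each bidegree) to finite-type abutment: a priori a strongly convergent spectral sequence with finite-type $E_2$ need not have finite-type abutment, because infinitely many filtration quotients can pile up in a single total degree and the inverse limit can fail to be finitely generated. The horizontal vanishing line on $E_\infty$ is precisely what rules this out, so the real work is in verifying that this vanishing line persists after smashing with $Y$ — but this follows from the corresponding statement for $\en^{hG}$ itself (the vanishing line is a property of the tower defining $\en^{hG}$, established in the earlier paper \cite{4}, and smashing with the finite spectrum $Y$ only shifts and truncates it). A secondary technical point is being careful that each Bockstein spectral sequence in the $I_n$--adic tower genuinely converges strongly; this again rests on the finiteness in each bidegree established in the first step, together with the completeness of $\ena Y$ as an $\ena$--module, which holds because $Y$ is finite.
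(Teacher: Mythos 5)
Your proposal is correct and follows essentially the paper's own proof: the strongly convergent homotopy fixed point spectral sequence $H^{\ast\ast}_c(G,\ena Y)\Rightarrow\pi_\ast(E^{hG}_n\wedge Y)$, finiteness of the $E_2$--term, and the horizontal vanishing line on $E_\infty$ (which, as you note, is available for $\en^{hG}$ smashed with any spectrum, by the earlier paper cited in the Introduction). The only difference is that you re-derive the finite-typeness of $H^{\ast\ast}_c(G,\ena Y)$ through a mod $I_n$ filtration and Bockstein towers, an unnecessary detour: the paper simply observes that $\ena Y$ is of finite type (since $Y$ is finite and $K(n{-}1)_\ast$--acyclic) and that continuous cohomology of the $p$--adic analytic group $G$, being of type $p$--$FP_\infty$, preserves this.
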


\begin{proof}
The proof is just as we argued in the Introduction: use the strongly
convergent spectral sequence
\[
H^{\ast\ast}_c(G,\ena Y)\Rightarrow \pi_\ast(E^{hG}_n\wedge Y)
\]
whose $E_\infty$ term has a horizontal vanishing line. Since $\ena Y$ is of
finite type, so is $H^{\ast\ast}_c(G,\ena Y)$. The horizontal vanishing line
then implies that $\pi_\ast(E^{hG}_n\wedge Y)$ is also of finite type.
\end{proof}

Now suppose $n\ge 2$ and $X$ is a $K(n{-}2)_\ast$--acyclic finite spectrum
with $v_{n-1}$ self-map $\nu$.  Let $X(\nu^k)$ denote the cofiber of
$\nu^k\co\Sigma^{k|\nu|}X\to X$, and let $X(\nu^\infty)$ denote the cofiber of
$X\to\nu^{-1}X$, so that
$$X(\nu^\infty)=\holim\displaylimits_{\rightarrow k}
  \Sigma^{-k|\nu|}X(\nu^k).$$
There are also canonical maps
$X(\nu^k)\to X(\nu^{k-1})$
and
$X\to\holim\displaylimits_{\leftarrow k}X(\nu^k)$.
We will need the following well-known result (cf. Hovey \cite[Section 2]{11}).

\begin{lem}\label{lemma4.2}
If $Z$ is any ($\ena$--local) spectrum, the map
$$Z\wedge X\to\holim\displaylimits_{\leftarrow k}Z\wedge X(\nu^k)$$
is the $K(n)_\ast$--localization of $Z\wedge X$.
\end{lem}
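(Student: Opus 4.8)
The plan is to identify $\holim_{\leftarrow k} Z\wedge X(\nu^k)$ as the Bousfield localization of $Z\wedge X$ with respect to $K(n)_\ast$, by checking the two defining properties: that the target is $K(n)_\ast$--local, and that the map is a $K(n)_\ast$--equivalence. First I would observe that since $X$ is $K(n{-}2)_\ast$--acyclic, each finite spectrum $X(\nu^k)$ is $K(m)_\ast$--acyclic for $m\le n-2$; and because the $v_{n-1}$ self-map $\nu$ becomes an isomorphism on $K(n{-}1)_\ast$, the cofiber $X(\nu^k)$ is also $K(n{-}1)_\ast$--acyclic. Hence $X(\nu^k)$ has nontrivial Morava $K$--theory (among $K(0),\ldots,K(n)$) only in degree $n$, so $X(\nu^k)$ is a ``type $n$'' complex in the $E_{n\ast}$--local category; consequently $Z\wedge X(\nu^k)$ is already $K(n)_\ast$--local for every $Z$ (smashing with a spectrum whose only surviving Morava $K$--theory is $K(n)$ forces $E_{n{-}1}$--acyclicity, and an $E_{n{-}1}$--acyclic $E_n$--local spectrum is $K(n)_\ast$--local). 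Since a homotopy limit of $K(n)_\ast$--local spectra is $K(n)_\ast$--local, the target $\holim_{\leftarrow k} Z\wedge X(\nu^k)$ is $K(n)_\ast$--local.

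Next I would show the map $Z\wedge X\to \holim_{\leftarrow k} Z\wedge X(\nu^k)$ is a $K(n)_\ast$--isomorphism. Smashing the defining cofiber sequences with $K(n)\wedge Z$ and using that $\nu$ is a $K(n{-}1)_\ast$ — hence also a $K(n)_\ast$ — isomorphism (a $v_{n-1}$ self-map on a type $n-1$ complex is invertible in $K(m)_\ast$ for $m\ge n-1$, in particular $m=n$; alternatively, on a $K(n{-}2)$--acyclic $X$ the self-map $\nu$ induces an iso on $K(n)_\ast$ for degree reasons once one recalls $X(\nu^k)$ is $K(n-1)$-acyclic... more carefully: $\nu$ is a $K(n-1)$-equivalence by definition of $v_{n-1}$ self-map, and since $X(\nu^k)$ is $K(n-1)$-acyclic, the relevant comparison is governed by $K(n)$, where $\nu^k$ acts as multiplication by a unit times a power of $v_{n-1}\in K(n)_\ast$, which is invertible), the telescope $\nu^{-1}X$ is $K(n)_\ast$--acyclic after smashing with anything; so $K(n)_\ast(Z\wedge X)\to K(n)_\ast(Z\wedge X(\nu^k))$ is an isomorphism for each fixed $k$, indeed $K(n)_\ast(Z\wedge X(\nu^k))\cong K(n)_\ast(Z\wedge X)$ compatibly in $k$. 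The remaining point is to commute $K(n)_\ast$ past the homotopy inverse limit: the $\lim^1$ term vanishes because $K(n)_\ast$ of the tower is eventually constant (in fact the maps $K(n)_\ast(Z\wedge X(\nu^k))\to K(n)_\ast(Z\wedge X(\nu^{k-1}))$ are isomorphisms), so the Milnor sequence collapses and $K(n)_\ast(\holim_{\leftarrow k} Z\wedge X(\nu^k))\cong K(n)_\ast(Z\wedge X)$, with the comparison map the identity.

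The main obstacle is the interchange of $K(n)_\ast(-)$ with $\holim_{\leftarrow k}$; this is not formal, since Morava $K$--theory does not commute with arbitrary homotopy limits. The resolution is exactly the one indicated: one must verify that the tower $\{K(n)_\ast(Z\wedge X(\nu^k))\}_k$ is Mittag--Leffler — here it is in fact pro--constant, which is the strongest possible form — so that $\lim^1$ vanishes and the Milnor exact sequence identifies $K(n)_\ast$ of the homotopy limit with the ordinary limit $K(n)_\ast(Z\wedge X)$. Since this reference points to \cite[Section 2]{11} for the ``well-known'' argument, I would simply cite Hovey's treatment for the details of the tower analysis rather than reproducing it. Combining the two parts: the map $Z\wedge X\to\holim_{\leftarrow k} Z\wedge X(\nu^k)$ is a $K(n)_\ast$--equivalence with $K(n)_\ast$--local target, hence is the $K(n)_\ast$--localization of $Z\wedge X$, as claimed.
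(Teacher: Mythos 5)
Your first half (that each $Z\wedge X(\nu^k)$ is $K(n)_\ast$--local, hence so is the homotopy limit) is fine and is the standard argument. The second half, however, rests on a false assertion: a $v_{n-1}$ self-map is \emph{not} a $K(n)_\ast$--isomorphism. The element $v_{n-1}$ maps to zero in $K(n)_\ast=\F_p[v_n,v_n^{-1}]$, and by the Hopkins--Smith theory a $v_{n-1}$ self-map induces an isomorphism on $K(n-1)_\ast$ but a \emph{nilpotent} map on $K(m)_\ast$ for $m\ne n-1$, in particular on $K(n)_\ast$. Your claim is even inconsistent with your own first paragraph: if $\nu$ were a $K(n)_\ast$--equivalence, its cofiber $X(\nu^k)$ would be $K(n)_\ast$--acyclic, whereas you (correctly) need it to be a nontrivial type--$n$ complex. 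Consequently the maps $K(n)_\ast(Z\wedge X)\to K(n)_\ast(Z\wedge X(\nu^k))$ are injections but not isomorphisms, and the tower maps are not isomorphisms either; what is true is that the tower is pro-isomorphic to the constant tower, precisely \emph{because} $\nu$ acts nilpotently (the extra summands coming from the connecting maps are linked by $\nu$ and die in the pro-category). So the mechanism making the lemma work is the opposite of the one you invoke.

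The second gap is the interchange of $K(n)_\ast$ with the homotopy inverse limit. The Milnor $\lim^1$ sequence computes homotopy groups of a homotopy limit; there is no such sequence expressing $K(n)_\ast(\holim_k Y_k)$ in terms of $\lim$ and $\lim^1$ of $K(n)_\ast Y_k$, because $K(n)\wedge(-)$ does not commute with $\holim$, so Mittag--Leffler (or even pro-constancy) of the tower of homology groups does not by itself give the identification you want. The standard repair --- and the one behind the cited reference --- is to test the $K(n)_\ast$--equivalence after smashing with a finite type--$n$ complex, for instance $X(\nu)$ itself: smashing with a finite complex \emph{does} commute with $\holim$, the fiber of the map in question becomes $\holim_k\,X(\nu)\wedge Z\wedge\Sigma^{k|\nu|}X$ taken along $1\wedge\nu$, and $1_{X(\nu)}\wedge\nu$ is nilpotent (equivalently, $X(\nu)\wedge\nu^{-1}X\simeq\ast$), so this tower is pro-trivial and the fiber is killed by $X(\nu)\wedge(-)$; since $K(n)_\ast X(\nu)\ne 0$ and $K(n)_\ast$ has K\"unneth isomorphisms, the fiber is $K(n)_\ast$--acyclic and the map is a $K(n)_\ast$--equivalence into a $K(n)_\ast$--local target. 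Deferring ``the details of the tower analysis'' to Hovey does not close the gap, because the argument you sketch (invertibility of $v_{n-1}$ in $K(n)_\ast$ plus a Milnor sequence for $K(n)_\ast$ of a holim) is not a correct version of his argument.
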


\begin{prop}\label{prop4.3}
$\nu^{-1}\pi_\ast(\en^{hG}\wedge X)$ is countable if and only if
$\pi_\ast(\en^{hG}\wedge X)$ is of finite type.
\end{prop}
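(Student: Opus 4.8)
The plan is to write $M_\ast=\pi_\ast(\en^{hG}\wedge X)$ and to treat the two implications separately. The forward implication is routine: if $M_\ast$ is of finite type then, in each degree, $\nu^{-1}M_\ast$ is the colimit $\operatorname{colim}\bigl(M_\ast\xrightarrow{\nu}M_{\ast+|\nu|}\xrightarrow{\nu}\cdots\bigr)$ of finitely generated abelian groups, hence is countable. For the converse I would first reduce to showing that a \emph{countable} $M_\ast$ is of finite type. To this end, smash the cofibre sequence $X\to\nu^{-1}X\to X(\nu^\infty)$ with $\en^{hG}$; since $\en^{hG}\wedge(-)$ and $\pi_\ast$ commute with the filtered homotopy colimits defining $\nu^{-1}X$ and $X(\nu^\infty)=\holim_{\to k}\Sigma^{-k|\nu|}X(\nu^k)$, we get $\pi_\ast(\en^{hG}\wedge\nu^{-1}X)=\nu^{-1}M_\ast$ and $\pi_\ast(\en^{hG}\wedge X(\nu^\infty))=\operatorname{colim}_k\pi_\ast\bigl(\Sigma^{-k|\nu|}\en^{hG}\wedge X(\nu^k)\bigr)$. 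Each $X(\nu^k)$ is finite and $K(n{-}1)_\ast$--acyclic, since $\nu$, being a $v_{n-1}$ self-map, is a $K(n{-}1)_\ast$--isomorphism; so $\pi_\ast(\en^{hG}\wedge X(\nu^k))$ is of finite type by \fullref{prop4.1}, and therefore $\pi_\ast(\en^{hG}\wedge X(\nu^\infty))$ is a countable colimit of countable groups, hence countable. The long exact sequence of the cofibre sequence now exhibits $M_\ast$ as an extension of a subgroup of $\nu^{-1}M_\ast$ by a quotient of $\pi_{\ast+1}(\en^{hG}\wedge X(\nu^\infty))$; hence if $\nu^{-1}M_\ast$ is countable, so is $M_\ast$.

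It remains to prove that a countable $M_\ast$ is of finite type, and this is where I would use that $\en^{hG}\wedge X$ is $K(n)_\ast$--local --- being $\en^{hG}$, which is $K(n)_\ast$--local, smashed with the finite spectrum $X$. By \fullref{lemma4.2} the natural map $\en^{hG}\wedge X\to\holim_{\leftarrow k}\en^{hG}\wedge X(\nu^k)$ is then an equivalence, so, with $Q^k_\ast=\pi_\ast(\en^{hG}\wedge X(\nu^k))$, the Milnor sequence reads $0\to\lim^1_k Q^k_{d+1}\to M_d\to\lim_k Q^k_d\to 0$ in each degree $d$. For fixed $d$ the maps $Q^k_d\to Q^{k-1}_d$ form a genuine tower of abelian groups, each $Q^k_d$ is a finitely generated $\Z_p$--module by \fullref{prop4.1}, and the transition maps, coming from maps of spectra, are $\Z_p$--linear and hence continuous for the $p$--adic topologies. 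Therefore both $\lim_k Q^k_d$ and $\lim^1_k Q^k_{d+1}$ are compact Hausdorff abelian groups: the former is a closed subgroup of the compact group $\prod_k Q^k_d$, and the latter is the cokernel of a continuous endomorphism of the compact group $\prod_k Q^k_{d+1}$, whose image is therefore closed. But a countable compact Hausdorff group is finite (a consequence of the Baire category theorem). So if $M_d$ is countable then $\lim^1_k Q^k_{d+1}$ and $\lim_k Q^k_d$ are finite, and hence so is $M_d$. Thus a countable $M_\ast$ is finite in every degree, in particular of finite type, and together with the previous paragraph this proves the proposition.

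The point on which everything turns --- and the reason the statement is true at all --- is the completeness input of the second paragraph: without it the implication is false, since for instance the $\F_p[\nu]$--module $\F_p[\nu,\nu^{-1}]/\F_p[\nu]$ is not of finite type yet has vanishing $\nu$--localization. So the main obstacle is to exploit the $K(n)_\ast$--locality of $\en^{hG}\wedge X$, via \fullref{lemma4.2}, in order to realise $M_\ast$ as the homotopy of $\holim_{\leftarrow k}\en^{hG}\wedge X(\nu^k)$, a tower whose terms have finite-type homotopy by \fullref{prop4.1}. Once that identification is in hand, the remaining work is just the elementary observations that the $\lim$ and $\lim^1$ of a tower of compact groups are again compact, and that a countable compact group is finite.
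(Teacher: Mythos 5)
Your proof is correct and follows essentially the same route as the paper: reduce countability of $\nu^{-1}\pi_\ast(\en^{hG}\wedge X)$ to countability of $\pi_\ast(\en^{hG}\wedge X)$ via the cofibre sequence with $X(\nu^\infty)$ and \fullref{prop4.1}, then use \fullref{lemma4.2} to write $\en^{hG}\wedge X$ as $\holim_k\en^{hG}\wedge X(\nu^k)$ and conclude that a countable homotopy group must be finite. Your explicit Milnor-sequence/compactness treatment of the $\lim^1$ term is just a more detailed version of the paper's assertion that $\pi_i(\en^{hG}\wedge X)$ is profinite (there the tower groups are in fact finite, being finitely generated and killed by a power of $p$), with the same Baire-category punchline that a countable compact Hausdorff group is finite.
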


\begin{proof}
\fullref{prop4.1} implies that $\pi_\ast(\en^{hG}\wedge
X(\nu^\infty))$ is countable, and therefore $\nu^{-1}\pi_\ast(\en^{hG}\wedge
X)$ is countable if and only if $\pi_\ast(\en^{hG}\wedge X)$ is countable.
But
$$\en^{hG}\wedge X\simeq\holim\displaylimits_{\leftarrow k} \en^{hG}\wedge
X(\nu^k);$$
it therefore again follows from \fullref{prop4.1} that
$\pi_i(\en^{hG}\wedge X)$ is profinite and is thus countable if and only if
it's finite.
\end{proof}

\begin{remark}\label{rem4.4}
The chromatic splitting conjecture (see Hovey \cite{11}) actually identifies
$\nu^{-1}(\en^{h\gn}\wedge X)=\nu^{-1}L_{K(n)}X$ as
$L_{n-1}X\lor\Sigma^{-1}L_{n-1}X$.
\end{remark}

Although $\pi_\ast(E^{hH_2}_2\wedge M(p))$ is not of finite type, this
proposition suggests to us the sense in which it is ``almost'' of finite
type. The details follow.

We will consider graded modules over the graded ring $\F_p[\nu]$, where
$\nu$ has positive even (unless $p=2$) degree, satisfying the following two
conditions:
\begin{enumerate}
\item[(i)]$M$ is complete in the sense that $M=\mathop{\lim}\limits_{\leftarrow i}M/\nu^iM$.
\item[(ii)]$M/\nu M$ is an $\fp$ vector space of finite type.
\end{enumerate}

\begin{prop}\label{prop4.5}
Let $X$ and $\nu$ be as above and suppose that $p\co X\to X$ is trivial. Then
$\pi_\ast(\en^{hG}\wedge X)$ is an $\fp[\nu]$--module satisfying
conditions (i)
and (ii).
\end{prop}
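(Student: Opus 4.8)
The plan is to reduce Proposition~4.5 to the structural facts already in hand: \fullref{lemma4.2}, \fullref{prop4.1}, and the finite-type statement for $v_{n-1}$--killed pieces. Write $\nu$ for the $v_{n-1}$ self-map of $X$ and $|\nu|$ for its degree. First I would verify condition (ii). Since $p\co X\to X$ is trivial, $X(\nu)$ (the cofiber of $\nu\co\Sigma^{|\nu|}X\to X$) is a finite spectrum which is $K(n{-}1)_\ast$--acyclic: indeed $K(n{-}1)_\ast X(\nu)$ sits in a long exact sequence built from the $K(n{-}1)_\ast X$ terms on which $\nu$ acts invertibly (as $\nu$ is a $v_{n-1}$ self-map and $K(n{-}1)_\ast X \ne 0$ only if $X$ is not $K(n{-}2)_\ast$--acyclic\,—\,here $X$ \emph{is} $K(n{-}2)_\ast$--acyclic, so $K(n{-}1)_\ast X$ is all there is below level $n$, and $\nu$ inverts it), hence $K(n{-}1)_\ast X(\nu)=0$. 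So \fullref{prop4.1} applies to $Y = X(\nu)$ and gives that $\pi_\ast(\en^{hG}\wedge X(\nu))$ is of finite type. Now the cofiber sequence $\Sigma^{|\nu|}X \to X \to X(\nu)$ smashed with $\en^{hG}$ yields a long exact sequence in homotopy whose connecting maps are multiplication by $\nu$; this identifies $\pi_\ast(\en^{hG}\wedge X)/\nu$ (and the $\nu$--torsion, but we only need the quotient) with a subquotient of the finite-type group $\pi_\ast(\en^{hG}\wedge X(\nu))$, so $\pi_\ast(\en^{hG}\wedge X)/\nu$ is of finite type in each degree, which is condition (ii).

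Next I would establish condition (i), completeness of $M \equiv \pi_\ast(\en^{hG}\wedge X)$ with respect to the $\nu$--adic filtration. By \fullref{lemma4.2} applied with $Z = \en^{hG}$, the natural map
\[
\en^{hG}\wedge X \longrightarrow \holim\displaylimits_{\leftarrow k} \en^{hG}\wedge X(\nu^k)
\]
is a $K(n)_\ast$--localization; but $\en^{hG}\wedge X$ is already $K(n)_\ast$--local (it is $\ena$--local\,—\,being built from $\en$\,—\,and $K(n{-}1)_\ast$--acyclic after smashing, so $L_{K(n)}$ and $L_n$ agree on it, and in fact $\en^{hG}$ is $K(n)_\ast$--local), so this map is an equivalence. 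Hence there is a $\lim^1$ short exact sequence
\[
0 \to {\lim_{\leftarrow k}}^1\, \pi_{\ast+1}(\en^{hG}\wedge X(\nu^k)) \to \pi_\ast(\en^{hG}\wedge X) \to \lim_{\leftarrow k}\, \pi_\ast(\en^{hG}\wedge X(\nu^k)) \to 0.
\]
The $\lim^1$ term vanishes because each $\pi_{\ast}(\en^{hG}\wedge X(\nu^k))$ is of finite type\,—\,this again uses \fullref{prop4.1}, since each $X(\nu^k)$ is $K(n{-}1)_\ast$--acyclic by the same argument as for $X(\nu)$, being an iterated extension of shifts of $X(\nu)$\,—\,and a tower of finite (hence finite, in each degree, after we note profiniteness as in the proof of \fullref{prop4.3}) groups is Mittag--Leffler. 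Therefore $M \xrightarrow{\ \approx\ } \lim_{\leftarrow k} \pi_\ast(\en^{hG}\wedge X(\nu^k))$. It remains to identify this inverse limit with $\lim_{\leftarrow k} M/\nu^k M$: the cofiber sequences $\Sigma^{k|\nu|}X \to X \to X(\nu^k)$ give compatible surjections $M \to \pi_\ast(\en^{hG}\wedge X(\nu^k))$ with kernel $\nu^k M$ (surjectivity and kernel identification come from the long exact sequence together with the finite-type\,/\,Mittag--Leffler input controlling the $\nu^k$--torsion contribution), and passing to the limit gives $M \cong \lim_{\leftarrow k} M/\nu^k M$, i.e.\ condition (i).

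The step I expect to be the main obstacle is the careful bookkeeping in the last paragraph: one must check that the maps $M/\nu^k M \to \pi_\ast(\en^{hG}\wedge X(\nu^k))$ induced by the cofiber sequence are isomorphisms (not merely that there is \emph{some} surjection), which requires controlling the image of the boundary map $\pi_{\ast+1}(\en^{hG}\wedge X(\nu^k)) \to \pi_\ast(\Sigma^{k|\nu|}\en^{hG}\wedge X)$\,—\,equivalently, showing the $\nu^k$--torsion in $M$ is exactly the kernel. This is where the finite-type conclusions of \fullref{prop4.1}, together with the profiniteness observation from the proof of \fullref{prop4.3} (each $\pi_i$ is an inverse limit of finite groups, hence compact), do the real work: compactness forces the relevant towers to be Mittag--Leffler and kills the obstruction $\lim^1$ terms uniformly. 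Everything else is a formal consequence of \fullref{lemma4.2} and the cofiber sequences.
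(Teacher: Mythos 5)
Most of your argument is sound and close to the paper's: condition (ii) via the embedding of $\pi_\ast(\en^{hG}\wedge X)/\nu$ into the finite-type group $\pi_\ast(\en^{hG}\wedge X(\nu))$, and the identification $M\equiv\pi_\ast(\en^{hG}\wedge X)\xrightarrow{\ \approx\ }\lim_{\leftarrow k}\pi_\ast(\en^{hG}\wedge X(\nu^k))$ via \fullref{lemma4.2} and vanishing of $\lim^1$ (the groups in the tower are finitely generated and $p$--power torsion, hence finite in each degree), are both correct. The genuine gap is the last step. You assert that the cofiber sequences give surjections $M\to\pi_\ast(\en^{hG}\wedge X(\nu^k))$ with kernel $\nu^kM$, ie that $M/\nu^kM\to\pi_\ast(\en^{hG}\wedge X(\nu^k))$ is an isomorphism. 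It is only injective: the long exact sequence of $\nu^k\co\Sigma^{k|\nu|}X\to X\to X(\nu^k)$ gives a short exact sequence
\[
0\to M/\nu^kM\to\pi_\ast(\en^{hG}\wedge X(\nu^k))\to\{x\in M:\nu^kx=0\}\to 0
\]
(with a degree shift on the right-hand term), so the cokernel is exactly the $\nu^k$--torsion of $M$. This is nonzero in precisely the situation the proposition is aimed at: for $X=M(p)$ and $G=H_2$, the $v_1$--torsion classes $c_t$ of \fullref{thm3.8} make the cokernel nonzero for every $k$. Your proposed remedy---compactness/Mittag--Leffler killing $\lim^1$ obstructions---cannot repair this, because the failure of surjectivity occurs at each finite stage of the tower; it is not a $\lim^1$ phenomenon.

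Fortunately you do not need the stagewise isomorphism, and this is how the paper closes the argument. The injections $M/\nu^kM\hookrightarrow\pi_\ast(\en^{hG}\wedge X(\nu^k))$ are compatible with the towers, so by left exactness of inverse limits they induce an injection $\lim_{\leftarrow k}M/\nu^kM\hookrightarrow\lim_{\leftarrow k}\pi_\ast(\en^{hG}\wedge X(\nu^k))$. The composite
\[
M\to\lim_{\leftarrow k}M/\nu^kM\to\lim_{\leftarrow k}\pi_\ast(\en^{hG}\wedge X(\nu^k))
\]
is the isomorphism you have already established; an isomorphism that factors through an injection forces both factors to be isomorphisms, so in particular $M\to\lim_{\leftarrow k}M/\nu^kM$ is an isomorphism, which is condition (i). Replacing your last paragraph by this commutative-diagram argument makes the proof complete, and it then coincides with the proof in the paper.
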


\begin{proof}
Since
\[
\frac{\pi_\ast(\en^{hG}\wedge X)}{\nu^k\pi_\ast(\en^{hG}\wedge X)}
\hookrightarrow \pi_\ast(\en^{hG}\wedge X(\nu^k)),
\]
we have the requisite finiteness. Moreover, it follows from the commutative
diagram
\[
\xymatrix {
\pi_\ast(\en^{hG}\wedge X) \ar[r]^<<<<<<{\approx} \ar[d] &
\lower20pt\hbox{$\mathop{\lim}\limits_{\mathop{\leftarrow}\limits_{k}}
\pi_\ast(\en^{hG}\wedge X(\nu^k))$} \\
\mathop{\lim}\limits_{\mathop{\leftarrow}\limits_{k}}\frac{\pi_\ast(\en^{hG}\wedge
X)}{\nu^k\pi_\ast(\en^{hG}\wedge X)} \ar@{^{(}->}[ur]
}
\]
that $\pi_\ast(\en^{hG}\wedge X)$ is complete.
\end{proof}

Torii \cite[Proposition 4.10]{21} shows that such a module $M$ may be
written as
\setcounter{equation}{5}
\begin{equation}\label{eqn4.6}
M\approx\prod_\alpha \Sigma^{n_\alpha}\fp[\nu]\times\prod_\beta
\Sigma^{m_\beta}\fp[\nu]/(\nu^{i_\beta}).
\end{equation}
If $M$ is of finite type, then the $n_\alpha$'s are bounded below and
\[
\nu^{-1}M\approx \nu^{-1}\prod_\alpha\Sigma^{n_\alpha}\fp[\nu] =
\mathop{\bigoplus}\limits_{\alpha} \Sigma^{n_\alpha} \fp[\nu,\nu^{-1}].
\]
In general, the torsion submodule $T$ of $M$ is a submodule of
$\prod_{\beta}\Sigma^{m_\beta}\fp[\nu]/(\nu^{i_\beta})$; its closure $\wwbar
T$ is equal to $\prod_\beta\Sigma^{m_\beta}\fp[\nu]/(\nu^{i_\beta})$. Let us
say that $M$ is \emph{essentially of finite rank} if there are only a finite
number of $\alpha$ in the decomposition \eqref{eqn4.6}; that is, if and only if
$M/\wwbar T$ is a finitely generated $\fp[\nu]$--module.

Our main theorem shows that $\pi_\ast(E^{hH_2}_2\wedge M(p))$ is essentially
of finite rank for $p>2$. We do not know, however, whether this property is
generic; that is, whether, given $G$, $\pi_\ast(\en^{hG}\wedge X)$ is
essentially of finite rank for all $X$ satisfying the hypotheses of
\fullref{prop4.5} if it is for one such $X$ with $K(n{-}1)_\ast X\ne
0$.

\bibliographystyle{gtart}
\bibliography{link}

\end{document}